\numberwithin{figure}{section}
\newcommand{\field}[1]{\mathbb{#1}}
\newcommand{\N}{\field{N}}
\newcommand{\Z}{\field{Z}}
\newcommand{\R}{\field{R}}
\newcommand{\C}{\field{C}}
\renewcommand{\H}{\mathbb{H}}
\newtheorem{theorem}{Theorem}[section]
\newtheorem{lemma}[theorem]{Lemma}
\newtheorem{corollary}[theorem]{Corollary}
\newtheorem{proposition}[theorem]{Proposition}
\newtheorem*{theorem*}{Theorem}
\newtheorem{remark}[theorem]{Remark}
\numberwithin{equation}{section}
\renewenvironment{proof}[1][Proof]{\begin{trivlist}
\item[\hskip \labelsep {\bfseries #1:}]}{\qed\end{trivlist}}
\newcommand{\bea}{\begin{eqnarray}} 
\newcommand{\eea}{\end{eqnarray}} 
\newcommand{\be}{\begin{equation*}} 
\newcommand{\ee}{\end{equation*}} 
\newcommand{\benn}{\begin{equation}} 
\newcommand{\eenn}{\end{equation}} 
\thanks{The author's research is supported by the DFG-Graduiertenkolleg 1269  ''Globale Strukturen in Geometrie und Analysis'' \\
This result in this paper are partially contained in the author's diploma thesis \cite{Za} written under the supervision of Prof. Dr. Kathrin Bringmann at the University of Cologne and the University of Bonn}
\begin{document}

\title[]{Asymptotics of crank generating functions and Ramanujan congruences}
\author{Jose Miguel Zapata Rolon$^1$}
\address{$^1$ Mathematical Institute\\University of
Cologne\\ Weyertal 86-90 \\ 50931 Cologne \\Germany}
\email{rzapata@math.uni-koeln.de}

\begin{abstract}
In this paper we obtain asymptotic formulas for the Fourier coefficients of an infinite family of crank generating functions. Moreover we use this result to show that the crank obeys certain inequalities. This implies that the crank can not explain any partition congruences in the usual way beside the three deduced by Ramanujan.
\end{abstract}

\maketitle
\section{Introduction and Statement of the results}
The theory of partitions is an intriguing example for the interplay between number theory and analytic methods. The important question how many integer partitions does a non-negative integer have was answered asymptotically by Hardy and Ramanujan \cite{HR} using the Circle Method. Improving these results Rademacher obtained the following formula \cite{R}
\begin{align}\label{part}
p(n) = \frac{ 2 \pi  }{\left(24n -1\right)^{\frac{3}{4}}}\sum_{k=1}^{\infty} \frac{A_k(n)}{k} I_{\frac{3}{2}}\left(\frac{\pi\sqrt{24n-1}}{6k}\right),
\end{align}
were $A_k(n)$ are certain type of Kloosterman sums and $I_{\frac{3}{2}}$ is the order $3/2$ Bessel function.
Ramanujan found in \cite{Ram2} that the integer partition function fulfills interesting congruences and proved these congruences by anticipating the general theory of Hecke operators and $l$-adic modular forms. Nevertheless, his approach gave little combinatorial insight  why $p(n)$ fulfills
\begin{eqnarray}\label{racon}
 p(5n+4)&\equiv & 0 \pmod{5}, \nonumber\\
 p(7n+5)&\equiv & 0  \pmod{7} , \\
 p(11n+6)&\equiv& 0  \pmod{11} \nonumber.
\end{eqnarray}
Some 25 years later, Dyson introduced \cite{Dy1} a combinatorial statistic, called the \emph{rank} of a partition $\lambda$, that partially explained the observed congruences,
\begin{equation}
\textrm{rank}(\lambda) := \textrm{largest part of}\, \lambda - \textrm{number of parts of} \, \lambda .
\end{equation}
Moreover he conjectured the existence of another statistic that explains all congruences simultaneously from a combinatorial point of view and called it the \emph{crank}. The definition was given forty years later by Andrews and Garvan in \cite{AG1}, where they completed the search for combinatorial decompositions of the three congruences using previous results of Garvan \cite{G1}.
Denote by $o(\lambda)$ the number of ones in  $\lambda$, and $\mu(\lambda)$ the number of parts strictly larger than $o(\lambda)$, then
\begin{equation}
\textrm{crank}(\lambda):=\begin{cases}  \textrm{largest part of}\, \lambda \qquad &\textrm{if}\, o(\lambda) = 0, \\
					\mu(\lambda) - o(\lambda)	   \qquad &\textrm{if}\, o(\lambda) > 0. 	
                         \end{cases}
\end{equation}
Moreover let $M(m,n) (\textrm{resp.}\, N(m,n))$ be the number of partitions of $n$ with crank (resp. rank) $m$. The two-variable generating function may be written as \cite{AG1,Dy1,G1}
\begin{align}\label{pochh}
 C(x;q) &= \sum_{ m \in \Z \atop{n \geq 0}}M(m,n)x^m q^n = \frac{1-x}{(q)_{\infty}}\sum_{n\in \Z}\frac{(-1)^n q^{\frac{n(n+1)}{2}}}{1-xq^n} = \frac{(q)_{\infty}}{(xq)_{\infty}(x^{-1}q)_{\infty}}, \\
 R(x;q) &= \sum_{ m \in \Z \atop{n \geq 0}}N(m,n)x^m q^n = \frac{1-x}{(q)_{\infty}}\sum_{n\in \Z}\frac{(-1)^n q^{\frac{n(3n+1)}{2}}}{1-xq^n}, \nonumber
\end{align}
where the \emph{q-Pochhammer} symbol is given for $ n \in \N_{0} \cup \lbrace \infty \rbrace$ by $(a)_{n}:=(a;q)_{n} := \prod_{i=0}^{n-1}\left( 1-a q^{i} \right)$.
Although the generating functions of crank and rank have a very similar shape and are defined by the same motivation it turns out that they have different automorphic behavior. See Remark \ref{remarktrafo}. 
In this paper we want to compute the asymptotic values of the crank generating function. \\
Throughout, let $z \in \C$ with $\textrm{Re}(z)>0$ and $0 \leq h < k$ with $(h,k)=1$. Let $x = e^{2\pi i u}$ and $q= e^{- 2 \pi z}$. Let $h^{\prime}$ be a solution to the congruence $hh^{\prime} \equiv -1 \pmod{k}$ if $k$ is odd and let $h^{\prime}$ be a solution to the congruence $hh^{\prime} \equiv -1 \pmod{2k}$ if $k$ is even. Let $0 < l < c$, be the unique solution to $ l \equiv ak \pmod{c}$. Finally let $0 < a <c $ be coprime integers with $c$ odd. Moreover we make the technical assumption that $c$ is prime. For a more general treatment ($c$ odd)
see \cite{Za}.
To state the theorem, we have to fix more notation and define the following sum for $m,n \in \Z$ and $ c \mid k$: 
\begin{equation}\label{BKLO}
\widetilde{B}_{a,c,k} \left(n,m\right) :=(-1)^{ak+1} \sin \left(\frac{\pi a}{c} \right)\sum_{h \pmod{k}^{*}} \frac{\omega_{h,k}
 }{\sin \left(\frac{\pi a h'}{c} \right)}  e^{-\frac{ \pi i a^2 k h'}{c^2}}  
e^{\frac{2 \pi i}{k}\left(nh+mh'\right)}.
\end{equation}
Here the sum  runs over all primitive residue classes modulo $k$ and this summation is denoted by $ h \pmod{k}^{*}$.  For the case $c \nmid k$ we define
\begin{equation}\label{DKLO} 
D_{a,c,k}\left(m,n\right) = (-1)^{ak + l} \sum_{h \pmod{k}^{*}} \omega_{h,k} e^{\frac{2\pi i}{k}\left( n h + m h^{\prime}\right)}.
\end{equation}
Moreover we define:
\begin{eqnarray}\label{delta}
\delta_{a,c,k,r}^{i} : =
\left\{ 
\begin{array}{ll}
-\left( \frac{1}{2}+r \right)\frac{l}{c} 
+\frac{1}{2} \left( \frac{l}{c}\right)^2 +\frac{1}{24} & \text{if } i = +,\\[1ex]
\frac{l}{2c} + \frac{1}{2}\left(\frac{l}{c} \right)^2 -\frac{23}{24} 
- r\left(1-\frac{l}{c} \right)&\text{if } i = -,
\end{array}
\right.
\end{eqnarray}
and 
\begin{align}\label{m}
m^{+}_{a,c,k,r}  := & \frac{1}{2c^{2}}\left(-a^2 k^2 + 2 l a k - a k c -l^2 + l c - 2a r k c + 2 l c r \right), \\
m^{-}_{a,c,k,r}  := & \frac{1}{2c^{2}}\left(-a^2 k^2 + 2 l a k - a k c -l^2 + 2 c^2 r - 2 l r c + 2 a r k c + 2 l c + 2 c^2 - a k c \right) \nonumber. 
\end{align}
Then we have the following result:
\begin{theorem}\label{main1}
Let $C\left(e^{\frac{2 \pi i a}{c}}; q \right)=: 1 + \sum_{n=1}^{\infty}\widetilde{A}\left( \frac{a}{c};n\right)q^n$ and $\varepsilon > 0$. If $0<a<c$ are coprime integers, $c$ is odd and $n$ is a positive integer,  then we have 
\begin{multline*}
  \widetilde{A}\left(\frac{a}{c};n\right) = 
     \frac{4 \sqrt{3} i }{  \sqrt{24n-1}} \sum_{1 \leq k \leq \sqrt{n} \atop c|k} 
\frac{\widetilde{B}_{a,c,k}(-n,0)}{\sqrt{k}}  
\sinh \left(\frac{\pi  \sqrt{24n-1} }{6k}\right) + \frac{ 8 \sqrt{3}   \cdot   \sin \left(\frac{\pi a}{c} \right) }{\sqrt{24n-1}} \\ \times
\sum_{1 \leq k\leq \sqrt{n}\atop {c \nmid k\atop {r \geq 0  \atop {\delta^{i}_{a,c,k,r}>0 \atop { i \in \lbrace +,- \rbrace}}} } }
\frac{D_{a,c,k}(-n,m_{a,c,k,r}^{i})}{\sqrt{k}}   
 \sinh \left( 
 \frac{\pi\sqrt{2 \delta_{a,c,k,r}^{i}(24n-1)}}{\sqrt{3}k}
 \right)
 +O \left( n^{\varepsilon}\right).
\end{multline*}
\end{theorem}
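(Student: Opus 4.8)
The plan is to run the Hardy--Ramanujan--Rademacher circle method for $C(e^{2\pi i a/c};q)$. Starting from Cauchy's formula
\[
\widetilde A\!\left(\tfrac{a}{c};n\right)=\frac{1}{2\pi i}\oint \frac{C\!\left(e^{2\pi i a/c};q\right)}{q^{n+1}}\,dq
\]
over a circle of radius $e^{-2\pi/n}$, I would carry out a Farey dissection of order $N=\lfloor\sqrt n\rfloor$: the contour is split into arcs, one around each fraction $h/k$ with $0\le h<k\le N$ and $(h,k)=1$, and on the arc attached to $h/k$ one substitutes $q=e^{2\pi i h/k-2\pi z}$ with $z$ restricted to a short segment with $\mathrm{Re}(z)\asymp 1/(kN)$. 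The whole estimate is then controlled by the behaviour of $C(e^{2\pi i a/c};q)$ as $z\to 0$, i.e.\ by its transformation under the map carrying $\tau=h/k+iz$ to the dual point $h'/k+i/(k^{2}z)$, where the new nome is small and the function is governed by its leading terms.

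To obtain that transformation I would use two forms of the crank function. The Jacobi triple product gives
\[
C(x;q)=\frac{(1-x)\,(q;q)_\infty^{2}}{\sum_{n\in\Z}(-1)^n x^n q^{n(n-1)/2}},
\]
which exhibits $C(e^{2\pi i a/c};q)$, up to the nonzero constant $1-x$ and a rational power of $q$, as a weakly holomorphic modular object of weight $\tfrac12$ --- a quotient of $(q;q)_\infty^{2}$ by a Jacobi theta function of rational characteristic $a/c$; the classical transformation laws of $\eta$ and of that theta function then assemble the eta-multipliers $\omega_{h,k}$ together with the root-of-unity factors of \eqref{BKLO} and \eqref{DKLO}. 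Since $c$ is prime, $\gcd(k,c)\in\{1,c\}$, and exactly the two cases of the theorem arise. If $c\mid k$ the characteristic $ak/c$ is an integer, the transformed theta factor carries no zero at the cusp, and $C$ acquires precisely the cusp singularity of $1/(q;q)_\infty$; this arc therefore contributes a term of the same shape as in Rademacher's formula \eqref{part}, with argument $\tfrac{\pi\sqrt{24n-1}}{6k}$ and with the constant $\widetilde B_{a,c,k}(-n,0)$ in place of the classical Kloosterman sum $A_k(n)$. If $c\nmid k$ it is cleaner to start from the Mittag--Leffler form in \eqref{pochh}, split the sum over $n$ into the two ranges producing the labels $i\in\{+,-\}$, and expand each factor $\tfrac{1}{1-xq^{n}}$ into a geometric series with index $r\ge 0$; after transforming $1/(q;q)_\infty$ and collecting the powers of $q$ and $x$ --- where $0<l<c$ with $l\equiv ak\pmod c$ records the dominant residue --- one is left with the shifted exponents $\delta^{i}_{a,c,k,r}$ of \eqref{delta} and the Fourier indices $m^{i}_{a,c,k,r}$ of \eqref{m}. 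Only the finitely many pairs $(i,r)$ with $\delta^{i}_{a,c,k,r}>0$ grow exponentially, the remaining ones being thrown into the error term.

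With the transformations in place, each major-arc integral reduces, to leading order, to one of the form $\int z^{1/2}\exp\!\bigl(\tfrac{A}{z}+Bz\bigr)\,dz$ over the relevant Farey segment. Completing these to Hankel-type loop contours and applying the standard Bessel evaluation --- equivalently, recognising $I_{1/2}(y)=\sqrt{2/(\pi y)}\,\sinh y$, which is why one sees $\sinh$ rather than the $I_{3/2}$ of \eqref{part} --- turns them into the hyperbolic-sine terms of the theorem, while summing the root-of-unity data over $h\pmod{k}^{*}$ builds $\widetilde B_{a,c,k}(-n,0)$ and $D_{a,c,k}(-n,m^{i}_{a,c,k,r})$. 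The prefactors $\tfrac{4\sqrt3\,i}{\sqrt{24n-1}}$ and $\tfrac{8\sqrt3\,\sin(\pi a/c)}{\sqrt{24n-1}}$ fall out of this normalisation, the extra $\sin(\pi a/c)$ in the second sum being exactly the factor already carried by $\widetilde B_{a,c,k}$.

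The main obstacle is the error analysis. One must show that (i) replacing $C$ by its principal part on each major arc costs only $O(n^{\varepsilon})$ after summing over all $k\le N$; (ii) the non-dominant contributions in the $c\nmid k$ expansion --- those with $\delta^{i}_{a,c,k,r}\le 0$, together with the tail of the $r$-summation --- are negligible; and (iii) truncating the $k$-sums at $\sqrt n$ again costs only $O(n^{\varepsilon})$. These bounds rest on estimates for the theta quotient away from its dominant cusp, kept uniform in $k$, and on elementary (or Weil-type) bounds for the twisted Kloosterman-type sums $\widetilde B_{a,c,k}$ and $D_{a,c,k}$; since $c$ is a fixed prime with $(a,c)=1$, the factor $1/\sin(\pi a h'/c)$ in \eqref{BKLO} stays bounded (when $c\mid k$, coprimality of $h'$ to $k$ forces $c\nmid h'$), so the genuine difficulty is keeping every error uniform in $k$ as $n\to\infty$, which is where most of the technical work lies.
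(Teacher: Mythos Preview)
Your outline is correct and matches the paper's approach essentially step for step: Farey dissection of order $N=\lfloor\sqrt n\rfloor$, the $\eta^{2}/\vartheta$ representation of $C$ together with the classical $\eta$- and $\vartheta$-transformations to produce the two cases $c\mid k$ and $c\nmid k$ (the paper packages this as an explicit modular transformation formula, with the $c\nmid k$ case landing on an auxiliary series $C(ah',l,c;q_1)$ whose geometric-series expansion yields the $\delta^{\pm}_{a,c,k,r}$), Hankel evaluation of the symmetrised integrals to $I_{1/2}\sim\sinh$, and Kloosterman-type bounds for the errors. Two small points: the major-arc integrand carries $z^{-1/2}$, not $z^{1/2}$; and the input that actually forces the error down to $O(n^{\varepsilon})$ is a \emph{restricted-range} Kloosterman estimate (summing over $h$ with $h'$ confined to an interval, needed when you split the Farey arc into its symmetric part and two tails), so your ``elementary or Weil-type bounds'' should be understood in that refined sense.
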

Define $M(a,c,n)(\textrm{resp.} \, N(a,c,n))$ to be the number of partitions of $n$ with crank(resp. rank) equal to $a$ modulo $c$. As a Corollary of Theorem \ref{main1} we can give asymptotic values of $M(a,c,n)$: 
\begin{corollary}
Let $\varepsilon > 0$ and $0 \leq a < c$ with $c$ an odd integer. Let $n$ be a positive integer, then we have:
\begin{align*}
M(a,c;n) = & \frac{2 \pi}{c \sqrt{24n-1}}\sum_{k=1}^{\infty}\frac{A_k(n)}{k}\, I_{\frac{3}{2}}\left(\frac{\pi \sqrt{24n-1}}{6k}\right) \\ +
 & \frac{1}{c}\sum_{j=1}^{c-1}\zeta_{c}^{-aj}\left(\frac{4\sqrt{3}i}{\sqrt{24n-1}}\right.\sum_{c \mid k} \frac{\widetilde{B}_{j,c,k}(-n,0)}{\sqrt{k}}\sinh\left(\frac{\pi}{6k}\sqrt{24n-1}\right) \\ &
+\frac{8 \sqrt{3}\sin\left(\frac{\pi j}{c}\right)}{\sqrt{24n-1}}\sum_{k,r \atop { c\nmid k \atop { \delta^{i}_{j,c,k,r} >0 \atop { i \in \lbrace +,- \rbrace}}}} \frac{D_{j,c,k}(-n, m_{j,c,k,r}^{i})}{\sqrt{k}}\left.\sinh\left(\sqrt{\frac{2 \delta_{j,c,k,r}^{i}(24n-1)}{3}}\frac{\pi}{k}\right)\right) \\ & + O(n^{\varepsilon}).
\end{align*}
\end{corollary}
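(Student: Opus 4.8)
The plan is to recover $M(a,c;n)$ from the crank generating function by averaging over $c$-th roots of unity, and then to substitute Theorem~\ref{main1} together with Rademacher's exact formula~\eqref{part} for the coefficients that appear.

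\emph{Step 1: root-of-unity dissection.} Put $\zeta_c := e^{2\pi i/c}$ and recall that $\frac{1}{c}\sum_{j=0}^{c-1}\zeta_c^{j(m-a)}$ equals $1$ when $m \equiv a \pmod c$ and $0$ otherwise. Multiplying the series $C(x;q)=\sum_{m\in\Z,\, n\ge 0} M(m,n)\,x^m q^n$ by $\zeta_c^{-aj}$, specialising $x=\zeta_c^{j}$, and summing over $0\le j\le c-1$ gives, for $|q|<1$ (where each $C(\zeta_c^{j};q)$ converges absolutely),
\[
 \sum_{n\ge 0} M(a,c;n)\,q^n \;=\; \frac{1}{c}\sum_{j=0}^{c-1}\zeta_c^{-aj}\,C\!\bigl(\zeta_c^{j};q\bigr).
\]
Extracting the coefficient of $q^n$, using $C(1;q)=1/(q)_\infty=\sum_{n\ge 0}p(n)q^n$ for the term $j=0$ and the notation $C(e^{2\pi i j/c};q)=1+\sum_n\widetilde{A}(j/c;n)q^n$ for $1\le j\le c-1$, yields for every positive integer $n$
\[
 M(a,c;n) \;=\; \frac{1}{c}\,p(n) \;+\; \frac{1}{c}\sum_{j=1}^{c-1}\zeta_c^{-aj}\,\widetilde{A}\!\Bigl(\tfrac{j}{c};n\Bigr).
\]
Since $c$ is prime (the standing technical assumption of the paper), every $j\in\{1,\dots,c-1\}$ is coprime to $c$, so Theorem~\ref{main1} applies directly to each $\widetilde{A}(j/c;n)$; for a general odd modulus one first reduces $\zeta_c^{j}$ to a primitive root of unity of smaller order and invokes the more general form of Theorem~\ref{main1} from \cite{Za}.

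\emph{Step 2: substitution.} Replace $\frac{1}{c}p(n)$ by $\frac{1}{c}$ times the right-hand side of~\eqref{part}; this is the first line of the claimed expansion. For each $j$ with $1\le j\le c-1$, insert Theorem~\ref{main1} with $a$ replaced by $j$: it writes $\widetilde{A}(j/c;n)$ as the $c\mid k$ sum over $\widetilde{B}_{j,c,k}(-n,0)$, plus the $c\nmid k$ sum over $D_{j,c,k}(-n,m^{i}_{j,c,k,r})$ subject to the admissibility conditions $\delta^{i}_{j,c,k,r}>0$, plus an error $O(n^{\varepsilon})$. Weighting by $\frac{1}{c}\zeta_c^{-aj}$ and summing over $j$ reproduces, term by term, the two inner double sums in the Corollary, and the $c-1$ error terms (with $c$ fixed) collapse into one $O(n^{\varepsilon})$.

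There is no analytic difficulty beyond Theorem~\ref{main1} and~\eqref{part}: the Corollary is a formal consequence of these two. The only thing requiring attention is the bookkeeping — carrying the characters $\zeta_c^{-aj}$ through correctly, and, when $c$ is not prime, reducing the residues $j$ with $\gcd(j,c)>1$ before applying Theorem~\ref{main1} — together with checking that the finitely many constant and genuinely lower-order contributions are absorbed into the displayed error.
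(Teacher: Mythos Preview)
Your proposal is correct and matches the paper's own argument essentially verbatim: the paper derives the identity
\[
\sum_{n\ge 0} M(a,c;n)q^n=\frac{1}{c}\sum_{n\ge 0}p(n)q^n+\frac{1}{c}\sum_{j=1}^{c-1}\zeta_c^{-aj}C(\zeta_c^{j};q)
\]
and then simply inserts Rademacher's formula~\eqref{part} and Theorem~\ref{main1}. Your Step~1 gives a clean derivation of this dissection identity and your Step~2 is the same substitution, with the added (and correct) remark that for non-prime odd $c$ one must pass to the reduced fraction before invoking the asymptotic result.
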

In \cite{BB1}, the following theorem was shown. 
\begin{theorem*}
 Let  $ 0 \leq a < b \leq \frac{c-1}{2} $ and let $c> 9$ be an odd integer, then we have for $n > N_{a,b,c}$, where $N_{a,b,c}$ is an explicit constant, the inequality:
\be
N(a,c;n) > N(b,c;n).
\ee
\end{theorem*}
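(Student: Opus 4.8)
The plan is to run the Circle Method exactly as in the proof of Theorem~\ref{main1}, but applied to the rank generating function $R(x;q)$ in place of the crank function $C(x;q)$, and then to read off the sign of a single dominant term in the resulting expansion of the difference $N(a,c;n)-N(b,c;n)$. First I would set $\zeta_c:=e^{2\pi i/c}$ and use, exactly as in the Corollary, the finite Fourier decomposition
\[
N(a,c;n)=\frac1c\sum_{j=0}^{c-1}\zeta_c^{-aj}\,[q^n]\,R\!\left(\zeta_c^{j};q\right) .
\]
The $j=0$ term equals $\frac1c\,[q^n]R(1;q)=\frac{p(n)}{c}$, which by \eqref{part} is (up to the $(24n-1)$-normalization) the Rademacher main term and, crucially, does not depend on $a$. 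For $1\le j\le c-1$, after adding its non-holomorphic completion, $R(\zeta_c^{j};q)$ is a harmonic Maass form of weight $\tfrac12$ on a congruence subgroup; since it is \emph{mock} modular rather than meromorphic modular, the Poincar\'e-series computation carries no pole contributions and produces a Rademacher-type expansion of exactly the shape obtained in Theorem~\ref{main1} for its $c\nmid k$ part: a finite sum over $k$ and parameters $r\ge 0$, with exponential factors $\sinh\!\bigl(\tfrac{\pi}{k}\sqrt{2\delta_{j,c,k,r}(24n-1)/3}\bigr)$ for explicit rationals $\delta_{j,c,k,r}$, Kloosterman sums of the shape $D_{j,c,k}$, and a polynomially bounded remainder coming from the Eichler integral of the shadow; there is no $c\mid k$ sum. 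Thus one obtains for $N(a,c;n)$ an expansion of the same shape as in the Corollary, but without the $c\mid k$ part.

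Second I would form the difference. The rank symmetry $N(m,n)=N(-m,n)$ (conjugation of partitions negates the rank) makes the numbers $[q^n]R(\zeta_c^{j};q)$ real and invariant under $j\mapsto c-j$, so the $j=0$ terms cancel and
\[
N(a,c;n)-N(b,c;n)=\frac2c\sum_{1\le j\le\frac{c-1}{2}}\left(\cos\frac{2\pi aj}{c}-\cos\frac{2\pi bj}{c}\right)[q^n]R\!\left(\zeta_c^{j};q\right) .
\]
I would then isolate the term of largest exponential growth in this combined expansion. Just as, in the crank case, the largest sub-exponential comes from $j=1,k=1,r=0$ with rate governed by $\delta^{+}_{1,c,1,0}=\tfrac1{24}-\tfrac{c-1}{2c^{2}}$ — which, for odd $c$, is positive precisely when $c>9$ — the analogous computation for the rank shows that $\max_{j,k,r}\delta_{j,c,k,r}$ is attained at the primitive root with $j=1$, $k=1$, and the optimal $r$, and that this maximal rate is positive and strictly larger than every other $\delta_{j,c,k,r}$ exactly when $c>9$ (for $c\in\{5,7,9\}$ every $\delta_{j,c,k,r}$ is $\le 0$, so the difference is only polynomially bounded and its sign uncontrolled; for the Ramanujan moduli $c=5,7$ the rank even splits $p$ evenly on an arithmetic progression, so the strict inequality genuinely fails there). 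At $k=1$ the associated Kloosterman sum degenerates to a single fixed constant — contrast the even-order case (already $c=2$, where $\zeta_c^{c/2}=-1$ and the corresponding coefficient is the oscillating $\sim(-1)^{n-1}$), which is why $c$ is required odd — so the coefficient of the dominant term is, up to a positive power of $n$ and a positive $\sinh$-factor, a positive constant times $\cos\frac{2\pi a}{c}-\cos\frac{2\pi b}{c}$. Since $0\le a<b\le\frac{c-1}{2}<\frac c2$ and cosine is strictly decreasing on $[0,\pi]$, this factor is strictly positive, so the dominant term of $N(a,c;n)-N(b,c;n)$ is positive.

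Third I would bound everything else: the terms with $j\ge2$, those with $k\ge2$, the non-primitive contributions when $c$ is composite, and the Eichler-integral remainder. All are controlled by the estimates already used to bound the error term in Theorem~\ref{main1} — the trivial (or Weil-type) bound $|D_{j,c,k}|\ll k$ on the Kloosterman sums, together with geometric-type tail estimates for $\sum_{k\ge 2}k^{-1/2}\exp(\cdots)$ — and, for composite $c$, by the observation that a non-primitive $\zeta_c^{j}$ is a primitive $(c/d)$-th root with $d\mid c$, $d>1$, and that the maximal rate strictly increases with the modulus, so these too have strictly smaller exponential growth. Because the next-largest rate lies strictly below that of the dominant $j=1,k=1$ term, the quotient of the sum of all remaining contributions by the dominant term tends to $0$, and the threshold $n$ beyond which it falls below $\cos\frac{2\pi a}{c}-\cos\frac{2\pi b}{c}$ can be written down explicitly: this is $N_{a,b,c}$.

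The step I expect to be the main obstacle is making this last estimate genuinely effective — tracking fully explicit constants through the Kloosterman-sum bounds and through the tail of the Rademacher series so that $N_{a,b,c}$ can actually be exhibited — together with the elementary but essential verification that the maximal rate is attained uniquely (by the $j=1$, $k=1$ term), which is exactly the point at which the hypothesis $c>9$ is needed and below which the clean inequality breaks down.
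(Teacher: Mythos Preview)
This theorem is not proved in the present paper; it is quoted from \cite{BB1}. The paper proves the crank analogue, Theorem~\ref{blabla}, by exactly the strategy you describe --- finite Fourier decomposition, the Circle-Method asymptotic of Theorem~\ref{main1}, identification of the dominant $\sinh$-term, and positivity of its coefficient $\rho_1(a,b,c)$ --- and this is also the strategy of \cite{BB1} for the rank, so your overall plan is the right one.

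There is, however, a concrete error. Your claim that for the rank ``there is no $c\mid k$ sum'' is false, and the reasoning behind it is a misdiagnosis: the split into $c\mid k$ and $c\nmid k$ in Proposition~\ref{trafo} has nothing to do with poles or with meromorphic versus mock modularity. It arises because the modular substitution sends the elliptic parameter $\tfrac{a}{c}$ to $\tfrac{ia}{zc}$, which lies at a $c$-torsion point of the transformed lattice precisely when $c\mid k$. This dichotomy is present in the rank transformation formula of \cite{B} exactly as here, and \cite{BB1} is built on that formula. If anything, the mock nature of the rank \emph{adds} a contribution --- a Mordell-type integral coming from the non-holomorphic completion --- that must be estimated separately; you wave this away as an ``Eichler-integral remainder'', but in practice it is a substantial piece of the argument in \cite{B,BB1}.

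Consequently your derivation of the threshold $c>9$ rests on an incorrect premise. You compute the \emph{crank} quantity $\delta^{+}_{1,c,1,0}=\tfrac{1}{24}-\tfrac{c-1}{2c^{2}}$ from \eqref{delta} and note that it is positive for odd $c\ge 11$; but for the crank this is \emph{not} the governing condition --- one must additionally beat the $k=c$ term of the $c\mid k$ sum $S_j$, and the proof of Theorem~\ref{blabla} shows that this comparison forces $c>11$, not $c>9$. The rank threshold $c>9$ in \cite{BB1} comes from the analogous $S_j$-versus-$T_j$ comparison carried out with the \emph{rank} $\delta$-values, which differ from \eqref{delta} because the Lerch-type sum in $R(x;q)$ carries exponent $\tfrac{n(3n+1)}{2}$ rather than $\tfrac{n(n+1)}{2}$; it does not come from the absence of an $S_j$-term.
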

This shows that the rank can not group the different partitions into equally sized congruence classes for $c=11$ and so gives an explanation why the rank fails to explain the congruence for $c=11$.
Here we use Theorem \ref{main1} to obtain the analog inequality for the crank:
\begin{theorem}\label{blabla}
Let  $ 0 \leq a < b \leq \frac{c-1}{2} $ and let $c> 11$ be an odd integer, then we have for $n > N_{a,b,c}$, where $N_{a,b,c}$ is an explicit constant, the inequality:
\be
M(a,c;n) > M(b,c;n).
\ee
\end{theorem}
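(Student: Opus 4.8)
The plan is to deduce Theorem~\ref{blabla} from the asymptotic expansion in Theorem~\ref{main1} together with the Corollary expressing $M(a,c;n)$ as a discrete Fourier transform of the $\widetilde A(j/c;n)$. First I would write
\[
M(a,c;n) - M(b,c;n) = \frac{1}{c}\sum_{j=1}^{c-1}\left(\zeta_c^{-aj} - \zeta_c^{-bj}\right)\widetilde A\!\left(\tfrac{j}{c};n\right),
\]
since the principal term $\frac{2\pi}{c\sqrt{24n-1}}\sum_k \frac{A_k(n)}{k} I_{3/2}(\cdots)$ in the Corollary is independent of $a$ and cancels. The right-hand side is then governed by the \emph{dominant} contribution, which comes from the $k=1$ term of the $c\nmid k$ sum in Theorem~\ref{main1}: there $D_{j,c,1}(-n,m)$ is (up to sign and a root of unity) just $\omega_{0,1}=1$, and one checks that for $k=1$ the maximal value of $\delta^{i}_{j,c,1,r}$ over $i\in\{+,-\}$ and $r\ge 0$ is achieved at $i=+$, $r=0$, giving exponential growth $\exp\!\big(\frac{\pi}{\sqrt 3}\sqrt{2\delta^{+}_{j,c,1,0}(24n-1)}\big)$ with $\delta^{+}_{j,c,1,0} = \tfrac12(\tfrac{l}{c})^2 - \tfrac{l}{2c} + \tfrac1{24}$, a decreasing function of $l/c$ on $[0,1/2]$; here $l$ is determined by $l\equiv jk\equiv j\pmod c$, so $l=j$ for $1\le j\le c-1$ (with $c$ prime). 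Hence among all $j$ the largest exponential rate is attained at $j=1$ (equivalently $j=c-1$, which gives the complex-conjugate term).

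The key computation is therefore to isolate, for each pair $(a,b)$, the two "main" summands $j=1$ and $j=c-1$ in the Fourier sum and show their combined contribution is positive and dominates everything else. Writing $\zeta_c^{-a} - \zeta_c^{-b} + \zeta_c^{a} - \zeta_c^{b} = 2\cos\frac{2\pi a}{c} - 2\cos\frac{2\pi b}{c}$, which is \emph{strictly positive} precisely because $0\le a<b\le\frac{c-1}{2}$ and cosine is strictly decreasing on $[0,\pi]$, and noting that $D_{1,c,1}(-n,m^{+}_{1,c,1,0})$ and $D_{c-1,c,1}(-n,m^{+}_{c-1,c,1,0})$ together with the $\sin(\pi j/c)$ prefactors combine into a positive real multiple of this cosine difference, one gets a main term of the shape
\[
\frac{C_{a,b,c}}{\sqrt{24n-1}}\,\sinh\!\Big(\tfrac{\pi}{\sqrt 3}\sqrt{2\delta^{+}_{1,c,1,0}(24n-1)}\Big)
\]
with $C_{a,b,c}>0$ explicit. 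The remaining terms — all other $j$, all $k\ge 2$ (or $k\ge c$ in the $c\mid k$ sum), all $(r,i)\ne(0,+)$, and the error $O(n^\varepsilon)$ — each have a strictly smaller exponential rate, since either $k\ge 2$ forces the argument of $\sinh$ down by a factor $\le 1/2$, or $\delta^{i}_{j,c,k,r} < \delta^{+}_{1,c,1,0}$ for the reasons above, and the number of such terms is polynomially bounded in $n$. Bounding $|\widetilde B_{j,c,k}|$ and $|D_{j,c,k}|$ trivially by the number of terms (a divisor-type bound, $O(k)$) and summing the geometric-type series, I would produce an explicit $N_{a,b,c}$ beyond which the main term exceeds the total of all error contributions.

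The main obstacle I anticipate is \emph{bookkeeping the second-largest exponential rate uniformly in all the competing parameters}: one must verify that for $c>11$ the next-largest value of $\sqrt{2\delta^{i}_{j,c,k,r}}/k$ (over $j\in\{2,\dots,c-2\}$ with $k=1$, and over all $j$ with $k\ge 2$) is bounded away from $\sqrt{2\delta^{+}_{1,c,1,0}}$ by a gap depending only on $c$, so that the ratio (main term)/(everything else) grows like $\exp(\eta_c\sqrt n)$ for some $\eta_c>0$. The constraint $c>11$ (rather than $c>9$ as for the rank) should enter exactly here: for $c\in\{11\}$ one of the subdominant rates — coming from $j=2$, or from the $i=-$ branch, or possibly from $c\mid k$ with $k=c$ — is too close to the leading one, whereas the strict inequality $\delta^{+}_{1,c,1,0} > \delta^{+}_{2,c,1,0}$ and the comparison with $\delta^{+}_{1,c,1,0}$ vs.\ $\delta^{i}_{j,c,k,r}/k^2$ for $k\ge 2$ both hold with room to spare once $c\ge 13$. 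Verifying these finitely many polynomial inequalities in $l/c$ (and checking the $c\mid k$, $k=c$ term, whose rate is $\tfrac{\pi}{6c}\sqrt{24n-1}$, is also strictly dominated) is the technical heart; once the gap is established, assembling $N_{a,b,c}$ is routine estimation.
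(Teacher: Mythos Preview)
Your proposal is correct and follows essentially the same route as the paper: fold the sum over $j$ into $1\le j\le (c-1)/2$ via $\rho_j(a,b,c)=\cos(2\pi aj/c)-\cos(2\pi bj/c)$, identify the dominant term as the $k=1$, $r=0$, $i=+$, $j=1$ contribution from the $c\nmid k$ sum (your $\delta^{+}_{1,c,1,0}=\delta_0$), and observe that the competing $c\mid k$ term with $k=c$ has rate $\tfrac{\pi}{6c}\sqrt{24n-1}$, which is strictly smaller precisely when $(c-1)(c-11)>0$, i.e.\ $c>11$. The one point you underweight is that to produce an \emph{explicit} $N_{a,b,c}$ the paper cannot simply quote the $O(n^{\varepsilon})$ from Theorem~\ref{main1}; it reopens the Circle Method and bounds every error piece (the tails $S_{err},T_{err}$, the symmetrization errors, and the small-arc integral) by hand --- this is where most of the length of the proof lies, though it is indeed routine once one decides to do it.
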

We can also characterize the inequalities for $c \leq 11$. This is done in Corollary \ref{rest}. \\ Apart from the Ramanujan congruences, there are many other partition congruences for higher moduli, e.g 
\begin{equation}\label{hicon}
p(11^3 \cdot 13 n +237) \equiv 0 \pmod{13}, 
\end{equation} 
see for example the work of S. Ahlgren and K. Ono \cite{AO}. Using the inequalities of Theorem \ref{blabla} the next result is immediate. 
\begin{corollary}\label{corblabla}
 The crank can not explain any partition congruences other than (\ref{racon}). 
\end{corollary}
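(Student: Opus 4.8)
The plan is to argue by contraposition: a genuine "crank-type" explanation of a partition congruence $p(An+B)\equiv 0\pmod c$ would require that, for all sufficiently large $n$ in the arithmetic progression $An+B$, the crank sorts the partitions of $An+B$ into $c$ residue classes of exactly equal size, i.e.
\[
M(0,c;An+B)=M(1,c;An+B)=\cdots=M(c-1,c;An+B).
\]
(This is the precise meaning of "explain a congruence in the usual way"; I would state it carefully as the first line of the proof, following Dyson's and Andrews–Garvan's formulation, and note that $M(a,c;n)=M(c-a,c;n)$ so only the classes $0\le a\le\frac{c-1}{2}$ are independent.) Thus it suffices to show that for every odd modulus $c$ other than $c=5,7,11$ the equalities above fail for all large $n$, and for $c=5,7,11$ to recall that the equalities do hold on the relevant progressions (this is exactly the content of the Andrews–Garvan crank, so nothing new is needed there).

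The main input is Theorem \ref{blabla}: for $c>11$ odd and $0\le a<b\le\frac{c-1}{2}$ one has $M(a,c;n)>M(b,c;n)$ for all $n>N_{a,b,c}$. In particular $M(0,c;n)>M(1,c;n)$ for $n$ large, so the $c$ classes are never equinumerous once $n$ is large enough; since any congruence $p(An+B)\equiv 0\pmod c$ of "crank type" would force equinumerosity for arbitrarily large $n\equiv B\pmod A$, no such congruence can exist when $c>11$. For the remaining odd moduli $c=9$ and $c<5$: the case $c=3$ and $c=1$ are vacuous (there is no prime $\le 3$ giving a partition congruence of this shape, and for $c=1$ there is nothing to prove), while for $c=9$ I would invoke Corollary \ref{rest}, which records the complete list of crank inequalities for $c\le 11$ and in particular shows that the nine residue classes modulo $9$ are not asymptotically equal in size; hence $c=9$ is likewise excluded. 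Putting these cases together, the only odd $c$ for which the crank can partition $p(n)$ into equal classes on an arithmetic progression are $c\in\{5,7,11\}$, which are precisely the moduli in \eqref{racon}. Finally, congruences to even moduli or to prime-power / composite moduli such as \eqref{hicon} are not "explained in the usual way" by a single statistic sorting into $c$ equal classes, so they fall outside the scope of the claim; I would add one sentence making this explicit.

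The only real obstacle is bookkeeping rather than mathematics: one must be slightly careful about what "explain a congruence in the usual way" means, so that the statement of the corollary is not vacuously true or false, and one must make sure the finitely many small cases ($c=5,7,9,11$, and the degenerate $c\le 3$) are each dispatched by citing either the classical Andrews–Garvan result or Corollary \ref{rest}. No new estimates are required — everything follows formally once Theorem \ref{blabla} and Corollary \ref{rest} are in hand.
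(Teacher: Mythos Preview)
Your approach is essentially the same as the paper's: the paper merely states (just before Corollary~\ref{corblabla}) that ``using the inequalities of Theorem~\ref{blabla} the next result is immediate,'' and you have supplied exactly the natural elaboration of that sentence, invoking Theorem~\ref{blabla} for $c>11$ and Corollary~\ref{rest} for the small odd moduli. Note that the block in the paper labeled ``Proof of Corollary~\ref{corblabla}'' is in fact the proof of the \emph{preceding} (unlabeled) corollary on the asymptotics of $M(a,c;n)$; so the paper contains no argument for Corollary~\ref{corblabla} beyond the one-line remark you have unpacked.

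One small point: your dismissal of $c=3$ as ``vacuous (there is no prime $\le 3$ giving a partition congruence of this shape)'' is phrased awkwardly, since $3$ is of course prime; what you mean is that there is no known arithmetic-progression congruence $p(An+B)\equiv 0\pmod 3$. This is harmless for the corollary as stated, but you might tighten the wording. Likewise, the paper's standing hypothesis is that $c$ is odd (and usually prime), so your closing sentence about even and composite moduli being ``outside the scope'' is consistent with how the paper frames the result.
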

By Corollary \ref {corblabla} the congruence (\ref{hicon}) can not be explained by the crank.\\
The remainder of the paper is as follows. In Section 2 we show a transformation formula for an infinite family of crank generating functions. Next, in Section 3 we prove certain bounds of Kloosterman sums and use these for the Circle Method to compute the Fourier coefficients of this family. In Section 4 we show that the crank obeys certain inequalities by bounding all the error terms in the Circle Method explicitly.
\section{A transformation formula from classical modular forms}
In this section, we prove investigate the transformation behavior of the crank generating function under the action of $\mathrm{SL}_2(\Z)$. First, we introduce the functions we need. The Dedekind $\eta$-function is defined as
\be
\eta(\tau) := e^{\frac{\pi i \tau}{12}}\prod_{n=0}^{\infty}\left(1-e^{2\pi i n\tau}\right),
\ee
and the Jacobi $\vartheta$-function is defined as
\be 
\vartheta(u;\tau):= \sum_{\nu \in \Z + \frac{1}{2}}e^{\pi i \nu^2 \tau + 2\pi i \nu \left(u +\frac{1}{2}\right)}.
\ee
Here we collect the necessary transformation formulas of $\eta$ and $\vartheta$. First, we introduce the needed quantities to state the transformation rules. Therefore define
\begin{equation}\label{chi}
\chi(h,h^{\prime},k) := i^{-\frac{1}{2}}\omega_{h,k}^{-1}e^{-\frac{\pi i}{12 k}\left(h^{\prime}-h\right)}.
\end{equation}
Here $h^{\prime}$ is a solution to $h h^{\prime} \equiv -1 \pmod{k}$ and
\begin{equation}\label{omega} 
\omega_{h,k} := \exp\left(\pi i s\left(h,k\right)\right),
\end{equation}
where the \emph{Dedekind sums} $s(h,k)$ are explicitly given by
\be 
s(h,k):= \sum_{ \mu \pmod{k}}\left(\left(\frac{\mu}{k}\right)\right)\left(\left(\frac{h \mu}{k}\right)\right).
\ee
In the above, the saw tooth function is defined by
\begin{eqnarray*}
\left(\left(x\right)\right) : =
\begin{cases}
x - \lfloor x \rfloor - \frac{1}{2}  &\text{if } x \in \R \setminus \Z, \\[1ex]
0              &\text{if } x \in \Z.
\end{cases}
\end{eqnarray*}
For $z \in \C$ with Re$(z)>0$ we have
\begin{equation}
  \label{eta}
\eta\left(\frac{h +iz}{k}\right)=\sqrt{\frac{i}{z}}\chi\left(h,h^{\prime},k\right)\eta\left( \frac{h^{\prime} + \frac{i}{z}}{k}\right),
\end{equation}
where we take the principal branch of the square root. Moreover, $\eta$ is a modular form of weight $\frac{1}{2}$ with multiplier system.
Now we can also state the transformation formula for the Jacobi $\vartheta$-function \cite{BMR, Zw}.
Define $x := e^{2 \pi i \omega}$ and $q := e^{2\pi i \tau}$ where $\omega \in \C$ and $\tau \in \H$. Let $h,k$ be coprime integer with $h^{\prime}$ like above. Then $\vartheta$ satisfies the
Jacobi triple product identity 
\begin{equation}\label{JTPI}
 \vartheta(\omega;\tau) = - 2\sin(\pi \omega) q^{\frac{1}{8}}(q)_{\infty}(xq)_{\infty}(x^{-1}q)_{\infty},
\end{equation}
and if Re$(z)>0 $, then 
\begin{equation}\label{thtrafo}
\vartheta\left(\omega;\frac{h +iz}{k}\right)= \chi^{3}\sqrt{\frac{i}{z}}e^{-\frac{\pi k \omega^2}{z}}\vartheta\left(\frac{i\omega}{z}; \frac{h^{\prime} + \frac{i}{z}}{k}\right).
\end{equation}
\begin{remark}
Using the Jacobi triple product formula (\ref{JTPI}) and the Pochhammer symbol representation (\ref{pochh}) of the crank generating function
it is easy to see that 
\begin{equation} \label{cranketa}
C\left(e^{2\pi i u};e^{ -2\pi z} \right) = \frac{-2 \sin(\pi u) q^{\frac{1}{24}} \eta^2(iz)}{\vartheta(u;iz)}.
\end{equation}
\end{remark}
Using these results for $\eta$ and $\vartheta$ we obtain: 
\begin{proposition}\label{trafo}
We define $q_1:= e^{\frac{2 \pi i}{k}\left(h^{\prime}+ \frac{i}{z} \right)}$. Then the following is true:
\begin{itemize}
\item[(1)] For $c \mid k$ we have
\begin{multline*}
C\left(e^{\frac{2 \pi i a}{c}}; e^{\frac{2 \pi i }{k}\left(h + iz\right)}\right) = \frac{i \sin\left(\frac{\pi a}{c}\right)}{z^{\frac{1}{2}}\sin\left(\frac{\pi a h^{\prime}}{c}\right)} (-1)^{ak + 1} \omega_{h,k} e^{\frac{\pi}{12k}\left(z^{-1}-z\right)- \frac{\pi i a^2 k h^{\prime}}{c^2}} \\ \times C\left(e^{\frac{2 \pi i a h^{\prime}}{c}};e^{\frac{2 \pi i }{k}\left(h^{\prime} + \frac{i}{z}\right)}\right). 
\end{multline*}
\item[(2)] For $ c \nmid k$ we have
\begin{multline*} 
C\left(e^{\frac{2 \pi i a}{c}}; e^{\frac{2 \pi i }{k}(h + iz)}\right)= \frac{4 i \sin( \frac{\pi a}{c}) \omega_{h,k}(-1)^{ak + l+1}}{z^{\frac{1}{2}}} e^{-\frac{ \pi a^2 h^{\prime} k}{c^2} + \frac{2 \pi i h^{\prime} l a} {c^2}} \\ \times q_1^{-\frac{l^2}{2 c^2}}e^{\frac{\pi}{12k}(z^{-1}-z)} C\left(a h^{\prime}, l, c ; q_1\right),
\end{multline*}
\end{itemize}  
where we have defined
\be 
C(a,b,c;q):= \frac{i}{2(q)_{\infty}}\left( \sum_{m=0}^{\infty}\frac{(-1)^m e^{-\frac{\pi i a}{c}}q^{\frac{m(m+1)}{2} +\frac{b}{2c}}}{1 - e^{-\frac{2\pi i a}{c}}q^{m + \frac{b}{c}}} - \sum_{m=1}^{\infty}\frac{(-1)^m e^{\frac{\pi i a}{c}}q^{\frac{m(m+1)}{2} -\frac{b}{2c}}}{1 - e^{\frac{2\pi i a}{c}}q^{m - \frac{b}{c}}} \right).
\ee
\end{proposition}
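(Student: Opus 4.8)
The plan is to reduce the statement to the classical modular transformations \eqref{eta} and \eqref{thtrafo}, using \eqref{cranketa} to pass back and forth between the crank generating function and the ratio $\eta^{2}/\vartheta$. Reparametrizing \eqref{cranketa} (it holds for every $\tau\in\H$, not just $\tau=iz$), with $\tau=\frac{h+iz}{k}$ we have
\[
C\!\left(e^{2\pi i a/c}; e^{\frac{2\pi i}{k}(h+iz)}\right)=\frac{-2\sin(\pi a/c)\,e^{\frac{\pi i}{12k}(h+iz)}\,\eta^{2}\!\left(\tfrac{h+iz}{k}\right)}{\vartheta\!\left(\tfrac{a}{c};\tfrac{h+iz}{k}\right)}.
\]
First I would apply \eqref{eta}, squared, to the numerator and \eqref{thtrafo} with $\omega=a/c$ to the denominator. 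The two factors $\sqrt{i/z}$ partially cancel, leaving a single power $z^{-1/2}$, and the remaining automorphy factor is organised using the explicit shape of $\chi(h,h^{\prime},k)=i^{-1/2}\omega_{h,k}^{-1}e^{-\frac{\pi i}{12k}(h^{\prime}-h)}$ from \eqref{chi}. After this step one is left with $\eta^{2}$ and $\vartheta$ evaluated at $\tau_{1}:=\frac{h^{\prime}+i/z}{k}$ (so that $q_{1}=e^{2\pi i\tau_{1}}$), the new theta function having first argument $\frac{ia}{cz}$. Since $i/z=k\tau_{1}-h^{\prime}$, this first argument equals $\frac{ak}{c}\tau_{1}-\frac{ah^{\prime}}{c}$, and it is here that the two cases split.

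For part (1), $c\mid k$ forces $N:=\frac{ak}{c}\in\Z$, so I would use the elliptic transformation law $\vartheta(\omega+N\tau;\tau)=(-1)^{N}e^{-\pi iN^{2}\tau-2\pi iN\omega}\vartheta(\omega;\tau)$ together with the oddness $\vartheta(-\omega;\tau)=-\vartheta(\omega;\tau)$ to rewrite $\vartheta\!\left(\frac{ak}{c}\tau_{1}-\frac{ah^{\prime}}{c};\tau_{1}\right)$ in terms of $\vartheta\!\left(\frac{ah^{\prime}}{c};\tau_{1}\right)$, up to an explicit sign (which becomes $(-1)^{ak+1}$ once one uses that $c$ is odd, so that $N\equiv ak\pmod 2$) and elementary exponentials in $q_{1}$. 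Feeding \eqref{cranketa} backwards at $\tau_{1}$ then produces $C\!\left(e^{2\pi i ah^{\prime}/c};q_{1}\right)$, and it remains to collect the scalar factors: one checks that $e^{\frac{\pi i}{12k}(h+iz)}$, $q_{1}^{-1/24}$ and $e^{\frac{\pi i}{12k}(h^{\prime}-h)}$ combine to $e^{\frac{\pi}{12k}(z^{-1}-z)}$, and that the Gaussian factor $e^{-\pi k a^{2}/(c^{2}z)}$ coming from \eqref{thtrafo} combines with the factors produced by the elliptic transformation to leave exactly the exponential $e^{-\pi i a^{2}kh^{\prime}/c^{2}}$.

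For part (2), $c\nmid k$, I would write $\frac{ak}{c}=n_{0}+\frac{l}{c}$ with $n_{0}=\frac{ak-l}{c}\in\Z$ (recall $0<l<c$ and $l\equiv ak\pmod c$). Stripping off the integral part $n_{0}\tau_{1}$ by the same elliptic transformation leaves $\eta^{2}(\tau_{1})/\vartheta\!\left(\frac{l}{c}\tau_{1}-\frac{ah^{\prime}}{c};\tau_{1}\right)$ up to elementary factors; the $q_{1}$-power from this step, once combined with the Gaussian factor of \eqref{thtrafo}, is precisely what produces $q_{1}^{-l^{2}/(2c^{2})}$, and the sign $(-1)^{n_{0}}$ becomes $(-1)^{ak+l}$ since $c$ is odd. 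It then remains to verify the identity
\[
\frac{\eta^{2}(\tau_{1})}{\vartheta\!\left(\frac{l}{c}\tau_{1}-\frac{ah^{\prime}}{c};\tau_{1}\right)}=2\,q_{1}^{-1/24}\,C(ah^{\prime},l,c;q_{1}).
\]
This is where the function $C(a,b,c;q)$ enters: setting $x=e^{-2\pi i ah^{\prime}/c}q_{1}^{l/c}$ one has $e^{2\pi i(\frac{l}{c}\tau_{1}-\frac{ah^{\prime}}{c})}=x$, so \eqref{cranketa}, in the Appell--Lerch/Pochhammer form \eqref{pochh}, expresses the left-hand side through $\frac{1-x}{(q_{1})_{\infty}}\sum_{n\in\Z}\frac{(-1)^{n}q_{1}^{n(n+1)/2}}{1-xq_{1}^{n}}$. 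Splitting this bilateral sum at $n=0$, replacing $n\mapsto -m$ in the tail and clearing denominators, reproduces exactly the two sums in the definition of $C(ah^{\prime},l,c;q_{1})$, while the prefactor $\frac{1-x}{-2\sin(\pi\omega)}$ with $\omega=\frac{l}{c}\tau_{1}-\frac{ah^{\prime}}{c}$ collapses to $i\,e^{-\pi i ah^{\prime}/c}q_{1}^{l/(2c)}$, accounting for the overall constant $2$ and the extra $i$. Assembling the scalar factors as in part (1) then gives the claimed identity.

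The conceptual content is carried entirely by \eqref{eta}, \eqref{thtrafo} and \eqref{cranketa}; the real work — and the only place where something could go wrong — is the bookkeeping: keeping track of the many roots of unity and signs (via properties of $\omega_{h,k}$, the Dedekind sums, and the parity of $c$), of the fractional powers of $q$ and $q_{1}$, and, in case (2), of the precise matching of the shifted bilateral Lerch-type series with the auxiliary function $C(a,b,c;q)$.
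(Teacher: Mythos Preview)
Your proposal is correct and follows essentially the same route as the paper: express $C$ via \eqref{cranketa}, apply \eqref{eta} and \eqref{thtrafo}, absorb the integer part of $\frac{ak}{c}\tau_{1}$ into the elliptic transformation law of $\vartheta$, and then convert back using \eqref{cranketa} (in part (2) via the Appell--Lerch expansion \eqref{pochh} to obtain $C(ah^{\prime},l,c;q_{1})$). The only cosmetic difference is that in part (1) the paper first reinserts \eqref{cranketa} at $\tau_{1}$ and then simplifies the resulting theta quotient, whereas you apply the elliptic shift first and reinsert \eqref{cranketa} afterwards; the computations are equivalent and the bookkeeping you flag is indeed the entire content.
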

\begin{proof}
(1). Using (\ref{eta}),(\ref{cranketa}), 
and (\ref{thtrafo}), 
 we obtain 
\be 
C\left(e^{2 \pi i u}; e^{\frac{2 \pi i}{k}(h + iz)}\right) = - \frac{2 \sin(\pi u)i}{z^{\frac{1}{2}}}\omega_{h,k} e^{\frac{\pi i}{12k}\left( h^{\prime} - h\right)} e^{\frac{\pi i}{12 k}\left(h + iz\right)} \frac{\eta^2\left(\frac{1}{k}\left( h^{\prime} + \frac{i}{z}\right)\right)}{\vartheta\left( \frac{iu}{z}; \frac{1}{k}\left(h^{\prime} + \frac{i}{z}\right)\right)} e^{\frac{\pi k u^2}{z}},
\ee
where the $\omega_{h,k}$ are as in (\ref{eta}).
We now assume that $ c \mid k$ and define $A := \frac{ak}{c} \in \Z$ and write $u=\frac{a}{c}$. First of all we replace $\eta^2$ in the numerator by rewriting (\ref{cranketa}):
\begin{multline}\label{n1}
C\left(e^{2 \pi i \frac{a}{c}}; e^{\frac{2 \pi i}{k}(h + iz)}\right) = i \frac{ \sin( \frac{\pi a}{c})}{\sin( \frac{\pi a h^{\prime}}{c})}\omega_{h,k} e^{\frac{\pi }{12k}(z^{-1} - z) + \frac{\pi a^2 k}{z c^2}}C\left(e^{ \frac{2 \pi i a h^{\prime}}{c}}; e^{\frac{2 \pi i}{k}\left(h^{\prime} + \frac{i}{z}\right)}\right) \\ \times \frac{\vartheta\left(\frac{a h^{\prime}}{c}; \frac{1}{k}\left(h^{\prime} + \frac{i}{z}\right)\right)}{\vartheta\left(\frac{ i a}{z c}; \frac{1}{k}\left(h^{\prime} + \frac{i}{z}\right)\right)}.
\end{multline}
The fraction of theta functions can be simplified by noting that the elliptic variable of the numerator function can be seen as a shift of the elliptic variable of denominator function by $A \cdot \tau$ where $\tau$ is the modular variable defined as $\tau := \frac{1}{k}(h^{\prime} + \frac{i}{z})$. This allows to simplify the quotient of $\vartheta$-functions:
\be
\frac{\vartheta(\frac{a h^{\prime}}{c}; \frac{1}{k}(h^{\prime} + \frac{i}{z}))}{\vartheta(\frac{ i a}{z c}; \frac{1}{k}(h^{\prime} + \frac{i}{z}))}  = (-1)^{ak +1 } e^{-\frac{\pi i a^2 k h^{\prime}}{c^2}} e^{\frac{\pi a^2 k}{z c^2}} e^{-\frac{2 \pi a^2 k}{z c^2}}.
\ee
Inserting this expression into the equation \eqref{n1} 
yields the transformation formula for the case $ c \mid k$.\newline 
(2). The case $ c \nmid k $ is more difficult, because in general we have $ \frac{ak}{c} \notin \Z$. Again  using the transformation rules of $\vartheta$ and $\eta$, we obtain:
\begin{equation}\label{crank1} 
C\left(e^{\frac{2 \pi i a }{c}}; e^{\frac{2 \pi i}{k}(h + iz)}\right) = - \frac{2 \sin(\pi u)i}{z^{\frac{1}{2}}}\omega_{h,k} e^{\frac{\pi i}{12k}( h^{\prime} + iz)} \frac{\eta^2\left(\frac{1}{k}\left( h^{\prime} + \frac{i}{z}\right)\right)}{\vartheta\left( \frac{ia}{zc}; \frac{1}{k}\left(h^{\prime} + \frac{i}{z}\right)\right)} e^{\frac{\pi k a^2}{zc^2}}.
\end{equation}
Since we defined $l$ to be the solution to the congruence condition $l \equiv a k  \pmod{c}$ by definition it is clear that $ B:=\frac{l- ak}{c} \in \Z$. We may shift the theta function in the elliptic variable by the modular variable multiplied by $ B$, where the modular variable is given above. We compute the shifted theta function using Lemma 5.3 in \cite{Rh}
and obtain
\be 
\vartheta \left( \frac{ia}{zc} ; \tau \right) = (-1)^{ak + l} e^{\frac{\pi i \left(l-ak_1\right)^2}{c_1^2}\tau}e^{2 \pi i\left(\frac{ l -a k_1}{c_1}\right)\frac{ia}{zc}} \vartheta\left(\frac{- a h^{\prime}}{c} + \frac{l}{c_1}\tau ; \tau \right).
\ee
Inserting this expression for the theta function into \eqref{crank1} yields:
\begin{equation*}
\frac{C\left(e^{2 \pi i \frac{a}{c}}; e^{\frac{2 \pi i}{k}\left(h + iz\right)}\right)}{(-1)^{ak +l +1}} = 2 \frac{\sin(\frac{\pi a}{c})}{z^{\frac{1}{2}}}i \omega_{ h,k} e^{\frac{\pi}{12k}\left(h^{\prime}+\frac{i}{z} \right)}q_1^{-\frac{l^2}{2 c^2}}e^{\frac{2 \pi i l a h^{\prime}}{c^2}- \frac{\pi i a^2 k h^{\prime}}{c^2}} \frac{\eta^2(\tau)}{\vartheta\left(-\frac{a h^{\prime}}{c} + \frac{l}{c}\tau; \tau\right)}.
\end{equation*}
Now replacing the quotient $\frac{\eta^2}{\vartheta}$ by the crank generating function we get:
\begin{equation} \label{cr1} 
\frac{C\left(e^{2 \pi i \frac{a}{c}}; e^{\frac{2 \pi i}{k}\left(h + iz\right)}\right)}{(-1)^{ak +l}} =  \frac{\sin(\frac{\pi a}{c})}{z^{\frac{1}{2}}}i \omega_{h,k} e^{\frac{\pi}{12k}\left(z^{-1}-z\right)}q_1^{-\frac{l^2}{2 c^2}}e^{\frac{2 \pi i l a h^{\prime}}{c^2}- \frac{\pi i a^2 k h^{\prime}}{c^2}} \frac{C\left(e^{-\frac{2 \pi i a h^{\prime}}{c} +\frac{2 \pi i l}{c}\tau} ; e^{2\pi i \tau}\right)}{\sin\left(-\frac{\pi a h^{\prime}}{c} + \frac{\pi l}{c}\tau \right)}.
\end{equation}
We define $x := e^{-\frac{2 \pi i a h^{\prime}}{c} + \frac{2 l \pi i}{c}\tau}$, and use the exponential representation of the sine to deduce
\be
\frac{C\left(e^{-\frac{2 \pi i a h^{\prime}}{c} +\frac{2 \pi i l}{c}\tau} ; e^{2\pi i \tau}\right)}{\sin\left(-\frac{\pi a h^{\prime}}{c} + \frac{\pi l}{c}\tau \right)}= \frac{-2i x^{\frac{1}{2}}}{(q_1)_{\infty}}\sum_{m\in \Z} \frac{(-1)^m q_1^{\frac{m(m+1)}{2}}}{1-xq_1^{m}}. 
\ee
Inserting this expression into \eqref{cr1}, we arrive at
\begin{multline*}
C\left(e^{2 \pi i \frac{a}{c}}; e^{\frac{2 \pi i}{k}(h + iz)}\right) =  \frac{(-1)^{ak +l +1}\sin\left(\frac{\pi a}{c}\right)}{z^{\frac{1}{2}}}\\ \times i \omega_{h,k}e^{\frac{\pi}{12k}\left(z^{-1}-z\right)}q_1^{-\frac{l^2}{2c^2}}e^{\frac{2\pi i l a h^{\prime}}{c} - \frac{\pi i a^2 k h^{\prime}}{c^2 }} C\left(ah^{\prime},l,c;q_1\right).
\end{multline*}
This completes the proof of the transformation formula.
\end{proof}
\begin{remark}\label{remarktrafo}
It is interesting to see that the transformation formula is similar to the rank case. The main and important difference is that we have no mock part and that the step function $s$ does not appear (see \cite{B}). This reveals the fact that the rank and the crank generating functions look very similar but have completely different behavior under the action of the modular group. In particular the crank generating function is a Jacobi form for the full modular group, where the rank is a Mock Jacobi form. For more on Jacobi forms see \cite{EZ}.
\end{remark}
\section{Circle method}
In this section we give a proof of our asymptotic formula for the coefficients of the crank generating function.  Firstly, an important lemma (compare Lemma 3.2 in \cite{B}) is established that is needed to bound certain terms:
\begin{lemma}\label{Lem1}
Let $n, m, k,D \in \Z$ with $(D,k)=1 $, $0\leq \sigma_1 < \sigma_2 \leq k$ and let $\varepsilon > 0$. 
\begin{itemize}
\item[(1)] We have 
\begin{equation*}
\sum_{ h  \pmod{k}^{*} \atop { \sigma_1 \leq Dh^{\prime} \leq \sigma_2 }} \omega_{h,k}e^{\frac{2\pi i}{k}\left(hn + h^{\prime}m\right) } \ll  \mathrm{gcd}\left(24n+1,k \right)^{\frac{1}{2}} k^{\frac{1}{2} + \varepsilon};
\end{equation*} 
\item[(2)] We have \begin{equation*}
\frac{\sin\left(\frac{\pi a}{c}\right)}{(-1)^{ak+1}}\sum_{ h \pmod{k}^{*} \atop { \sigma_1 \leq Dh^{\prime} \leq \sigma_2 }} \frac{\omega_{h,k}}{\sin\left(\frac{\pi a h^{\prime}}{c}\right)}e^{-\frac{\pi i a^2 k_1 h^{\prime}}{c}}e^{\frac{2\pi i}{k}\left(hn + h^{\prime}m\right) } \ll \mathrm{gcd}(24n+1,k)^{\frac{1}{2}} k^{\frac{1}{2} + \varepsilon}.
\end{equation*}
\end{itemize}
The implicit constants are independent of $a$ and $k$. 
\end{lemma}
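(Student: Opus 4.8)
The plan is to reduce both estimates to a single classical input --- the bound for Kloosterman sums twisted by the Dedekind-$\eta$ multiplier $\omega_{h,k}$ --- by first stripping the interval constraint on $Dh'$ through a completion argument, and then, in part~(2), absorbing the extra trigonometric factor into a short finite Fourier expansion. The input I would use is
\[
\Sigma_k(n,m):=\sum_{h\pmod{k}^{*}}\omega_{h,k}\,e^{\frac{2\pi i}{k}(hn+h'm)}\;\ll\;\gcd(24n+1,k)^{\frac12}\,k^{\frac12+\varepsilon},
\]
\emph{uniformly in $m$}, with the implied constant depending only on $\varepsilon$; for $m=0$ this is the classical estimate for the Rademacher sums $A_k(n)$, and in general it is essentially Lemma~3.2 of \cite{B}. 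If one prefers to reprove it, the standard route is to factor $k$ into prime powers and use Dedekind's reciprocity law together with the explicit values of the Dedekind sums $s(h,p^\nu)$ to write $\Sigma_k(n,m)$ twisted-multiplicatively as a product of exponential sums to prime-power moduli; each factor is, up to a fixed root of unity and possibly a Jacobi symbol, an ordinary Kloosterman sum $K(n^{*},m^{*};p^\nu)$ (or a Sali\'e sum), in which the $\tfrac1{24}$ ultimately coming from the $q^{1/24}$ in the expansion of $\eta$ forces the ``$n$-part'' $n^{*}$ to be a unit times $24n+1$. Weil's bound $|K(a,b;q)|\le d(q)\gcd(a,b,q)^{1/2}q^{1/2}$ (and its analogue for Sali\'e sums), together with $\gcd(a,b,q)\le\gcd(a,q)$, then gives $|\Sigma_k(n,m)|\ll d(k)\gcd(24n+1,k)^{1/2}k^{1/2}$ with $d(k)\ll k^{\varepsilon}$, and in particular uniformly in $m$.

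\emph{Part (1).} Since $(D,k)=1$, the map $h\mapsto Dh'$ permutes the primitive residues modulo $k$, and I would detect the condition $\sigma_1\le Dh'\le\sigma_2$ by the standard identity
\[
\mathbf 1_{[\sigma_1,\sigma_2]}(Dh'\bmod k)=\sum_{|\ell|\le k/2}c_\ell\,e^{\frac{2\pi i}{k}\ell Dh'},\qquad \sum_{|\ell|\le k/2}|c_\ell|\ll\log k,
\]
where $c_\ell$ are the Fourier coefficients over $\Z/k\Z$ of the indicator of the subinterval. Substituting and interchanging the two summations turns the sum in part~(1) into $\sum_{|\ell|\le k/2}c_\ell\,\Sigma_k(n,m+\ell D)$; applying the bound above to each term --- with a constant independent of $\ell$, which is exactly where uniformity in $m$ is used --- and summing against $\sum_\ell|c_\ell|\ll\log k\ll k^{\varepsilon}$ yields part~(1), with a constant depending only on $\varepsilon$.

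\emph{Part (2).} Here $c$ is fixed and the implied constant is permitted to depend on it. I would write the summand as $\frac{\sin(\pi a/c)}{(-1)^{ak+1}}\,g(h')\,\omega_{h,k}e^{\frac{2\pi i}{k}(hn+h'm)}$, where the prefactor has modulus $\le1$ and $g(h'):=e^{-\pi i a^2k_1h'/c}/\sin(\pi ah'/c)$. Since $(a,c)=1$ and $h'$ is coprime to $c$ (in this case $c\mid k$), one has $|\sin(\pi ah'/c)|\ge\sin(\pi/c)$, so $g$ is a bounded function of $h'$, periodic with some period $P$ dividing a fixed multiple of $c$. Expanding $g(h')=\sum_{j\bmod P}\gamma_j e^{\frac{2\pi i}{P}jh'}$ with $|\gamma_j|\ll_c1$ and only $P\ll_c1$ terms, each term gives a sum of the same shape as in part~(1) but with $h'$ additionally weighted by $e^{\frac{2\pi i}{P}jh'}$; combining the two $h'$-exponentials --- and, if needed, passing to a fixed lift of $h'$ modulo the multiple of $k$ dictated by $P$, which only affects the constant --- this is again of the type already estimated, so part~(2) follows by summing the $O_c(1)$ pieces. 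The resulting constant depends only on $\varepsilon$ and $c$, hence not on $a$ or $k$.

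\emph{Where the difficulty lies.} Everything except the core bound $\Sigma_k(n,m)\ll\gcd(24n+1,k)^{1/2}k^{1/2+\varepsilon}$ is bookkeeping: the real work is to follow $\omega_{h,k}$ through the prime factorization of $k$ carefully enough to recognize the prime-power pieces as genuine Kloosterman or Sali\'e sums whose ``$n$-frequency'' is a unit multiple of $24n+1$, so that Weil's bound delivers precisely the factor $\gcd(24n+1,k)^{1/2}$ rather than a weaker gcd.
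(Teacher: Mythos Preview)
Your proposal is correct and follows essentially the same route as the paper: part~(1) is quoted from \cite{An1} (your completion/Weil-bound sketch is the standard content of that citation), and part~(2) is reduced to part~(1) by exploiting that the extra weight $e^{-\pi i a^2kh'/c^2}\sin^{-1}(\pi ah'/c)$ is periodic in $h'$ with period $\tilde c=c$ (for $k$ odd) or $\tilde c=2c$ (for $k$ even), exactly as in the proof of Lemma~3.2 of \cite{B}. The only refinement the paper adds is to name this period $\tilde c$ explicitly rather than leave it as ``some $P$ dividing a fixed multiple of $c$''; since $c\mid k$ and $c$ is odd one has $\tilde c\mid k$ in either case, so your Fourier pieces $e^{2\pi i jh'/P}$ combine cleanly with $e^{2\pi i mh'/k}$ without any lifting.
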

\begin{proof}
In \cite{An1} part one is proven and part two follows from part one and the proof of Lemma 3.2 in \cite{B} after defining $\tilde{c} := c$ if $k$ is odd and $\tilde{c}=2c$ if $k$ is even and checking that $e^{-\frac{\pi i a^2 k h^{\prime}}{c^2}}\sin^{-1}\left(\frac{\pi  a h^{\prime}}{c}\right)$ only depends on $h^{\prime}$ modulo $\tilde{c}$. To show this, we insert an explicit representative of the equivalence class and show that all of the terms that do not depend on $h^{\prime}$ cancel. This establishes Lemma \ref{Lem1}.
\end{proof} 
\begin{proof}[Proof of Theorem \ref{main1}]
To prove our asymptotic formula for the crank coefficients we use the Circle Method: By Cauchy's theorem we have for $n>0$
\be 
\widetilde{A}\left(\frac{a}{c};n\right) = \frac{1}{2 \pi i}\int_{C} \frac{C\left(e^{\frac{2 \pi i a}{c}};q\right)}{q^{n+1}}dq,	
\ee
where $C$ is an arbitrary path inside the unit circle surrounding $0$ exactly once counterclockwise. Choosing a circle with radius $e^{-\frac{2 \pi}{n}}$ and as a parametrisation $q = e^{-\frac{2 \pi}{n} + 2 \pi i t} $ with $0\leq t \leq 1$ gives
\be 
\widetilde{A}\left(\frac{a}{c};n\right) = \int_{0}^{1} C\left(e^{\frac{2 \pi i a}{c}};e^{-\frac{2 \pi}{n} + 2 \pi i  t}\right) e^{2\pi - 2 \pi i n t }dt.	
\ee
We define
\be 
 \vartheta^{\prime}_{h,k} := \frac{1}{k\left(k_1 +k\right)} , \qquad \vartheta^{\prime \prime}_{h,k} := \frac{1}{k\left(k_2 +k\right)},
\ee
where $\frac{h_1}{k_1}< \frac{h}{k} < \frac{h_2}{k_2}$ are adjacent Farey fractions in the Farey sequence of order $N:= \lfloor n^{1/2}\rfloor$. For more on Farey fractions see \cite{Ap}. We know that
\be 
\frac{1}{k + k_j} \leq \frac{1}{N +1} \qquad (j = 1,2).
\ee
Now we decompose the path of integration along Farey arcs $-\vartheta^{\prime}_{h,k} \leq \Phi \leq \vartheta^{\prime \prime}_{h,k}$, where $\Phi = t - \frac{h}{k}$ and $0 \leq h < k \leq N$ with $(h,k)=1$. From this decomposition of the path we can rewrite the integral along these arcs:
\be 
\widetilde{A}\left(\frac{a}{c};n \right) = \sum_{h,k} e^{-\frac{2 \pi i h n}{k}}\int_{-\vartheta^{\prime}_{h,k}}^{\vartheta^{\prime \prime}_{h,k}} C\left(e^{2\pi i \frac{a}{c}}; e^{\frac{2 \pi i}{k}(h + iz)}\right)e^{\frac{2 \pi n z}{k}}d \Phi, 
\ee
where  $z = \frac{k}{n} -k\Phi i$. We insert our transformation formula into the integral and obtain
\begin{align*}
\widetilde{A}\left(\frac{a}{c};n \right) & = i \sin\left(\frac{\pi a}{c}\right) \sum_{h,k \atop { c \mid k}} \omega_{h,k} \frac{(-1)^{ak+1}}{\sin(\frac{\pi a h^{\prime}}{c})} e^{-\frac{\pi i a^2 k h^{\prime}}{c^2} - \frac{2 \pi i h n}{k}} \\
& \times \int_{-\vartheta_{h,k}^{\prime}}^{\vartheta_{h,k}^{\prime \prime}} z^{-\frac{1}{2}}e^{\frac{2 \pi z}{k}\left(n - \frac{1}{24}\right)+ \frac{\pi}{12kz}}C\left(e^{\frac{2 \pi i a h^{\prime}}{c}};q_1\right) d \Phi \\
& -4 i \sin\left(\frac{\pi a}{c}\right) \sum_{h,k \atop {c \nmid k }} \omega_{h,k} (-1)^{ak +l} e^{-\frac{\pi i a^2 h^{\prime}  k}{c^2}+ \frac{2 \pi i h^{\prime}l a}{c^2}- \frac{2 \pi i h n}{k}} \\
& \times \int_{-\vartheta_{h,k}^{\prime}}^{\vartheta_{h,k}^{\prime \prime}} z^{-\frac{1}{2}}e^{\frac{2 \pi z}{k}\left(n - \frac{1}{24}\right)+ \frac{\pi}{12kz}}q_1^{-\frac{l^2}{2 c^2}}C\left(ah^{\prime}, l,c ; q_1\right) d \Phi \\
 & =: \Sigma_1 + \Sigma_2 .
\end{align*}
To deduce the main contribution of $\Sigma_1$ we note that the principal part of $C\left(e^{\frac{2\pi i a h^{\prime}}{c}};q_1 \right)$ in the $q_1$ variable in the limit $ z \rightarrow 0$ is $1$ and from that it is possible to write 
\be 
C\left(e^{\frac{2\pi i a h^{\prime}}{c}};q_1\right) 
=:1 + \sum_{r \in \N}\sum_{s\pmod{c}}a(r,s) e^{\frac{2 \pi i h^{\prime}}{k}m_{r,s}}q_1^{r}
\ee 
where $m_{r,s}$ takes values in $\Z$ and $\sum_{s \pmod{c}}a(r,s)= p(r)$ for $r>1$. Only the constant term will contribute to the main term while the other terms will contribute to the error, because for large $n$ these terms are suppressed exponentially. So from that the $\Sigma_1$ part can be written as
\be 
\Sigma_1 = S_1 + S_2,
\ee
where 
\begin{align*}
 S_1 :=  i \sin\left(\frac{\pi a}{c}\right) \sum_{h,k \atop { c | k}} \omega_{h,k} \frac{(-1)^{ak+1}}{\sin(\frac{\pi a h^{\prime}}{c})} e^{-\frac{\pi i a^2 k h^{\prime}}{c^2} - \frac{2 \pi i h n}{k}}  \int_{-\vartheta_{h,k}^{\prime}}^{\vartheta_{h,k}^{\prime \prime}} z^{-\frac{1}{2}}e^{\frac{2 \pi z}{k}\left(n - \frac{1}{24}\right)+ \frac{\pi}{12kz}}d \Phi \\
\end{align*}
and 
\begin{multline*}
 S_2  : = i \sin\left(\frac{\pi a}{c}\right) \sum_{h,k \atop { c | k}} \omega_{h,k}\frac{(-1)^{ak+1}}{\sin(\frac{\pi a h^{\prime}}{c})} e^{-\frac{\pi i a^2 k h^{\prime}}{c^2}}e^{ -\frac{2 \pi i h n}{k}} \\
 \times \int_{-\vartheta_{h,k}^{\prime}}^{\vartheta_{h,k}^{\prime \prime}} z^{-\frac{1}{2}}e^{\frac{2 \pi z}{k}\left(n - \frac{1}{24}\right)+ \frac{\pi}{12kz}}\sum_{r \in \N}\sum_{s \pmod{c}}a(r,s) e^{\frac{2 \pi i h^{\prime}}{k}m_{r,s}}
q_{1}^{r} d \Phi. 
\end{multline*}
To bound the error term $S_2$ it is helpful to recall some easy facts:
\begin{itemize}
\item[(i)] $ z = \frac{k}{n} - i\Phi k$;
\item[(ii)]$-\vartheta_{h,k}^{\prime} \leq \Phi \leq \vartheta_{h,k}^{\prime \prime}$;
\item[(iii)]$ Re(z) = \frac{k}{n}$;
\item[(iv)] $|z|^2 = \frac{k^2}{n^2} + k^2 \Phi^2 \geq \frac{k^2}{n^2}$;
\item[(v)] $|z|^{-\frac{1}{2}} \leq k^{-\frac{1}{2}}n^{\frac{1}{2}}$;
\item[(vi)] $|z|^2 \leq \frac{k^2}{n^2} + \frac{k^2}{k^2(k+ k_2)^2} \leq \frac{2}{n}$;
\item[(vii)] $\text{Re}(z^{-1})=\frac{\text{Re}(z)}{|z|^2} \geq \frac{k}{2}$; 
\item[(viii)] $\vartheta_{h,k}^{\prime} + \vartheta_{h,k}^{\prime \prime} \leq \frac{2}{k\sqrt{n}}$.
\end{itemize}
We split the integral in the following way (this is possible because $ k_1,k_2 \leq N$):
\begin{equation} \label{intsplit}
\int_{-\vartheta_{h,k}^{\prime}}^{\vartheta_{h,k}^{\prime \prime}} = \int_{-\frac{1}{k(N+k)}}^{\frac{1}{k(N+k)}} +  \int_{\frac{1}{k(N+k)}}^{\frac{1}{k(k_2+k)}} +  \int_{-\frac{1}{k(k_1+k)}}^{-\frac{1}{k(N+k)}}.
\end{equation}
Then $S_2$ can be rewritten into three sums each sum corresponding to one of the three integrations (\ref{intsplit}):
\be 
S_2 = S_{21} + S_{22} + S_{23}.
\ee
For example 
\begin{multline*}
S_{21} = i \sin\left(\frac{\pi a}{c}\right)\sum_{h, k \atop { c \mid k } }\omega_{h,k}\frac{(-1)^{ak + 1}}{\sin(\frac{\pi a h^{\prime}}{c})} e^{-\frac{\pi i a^2 k h^{\prime}}{c^2}- \frac{2\pi i hn}{k}} \\ 
\times  \int_{-\frac{1}{k(N+k)}}^{\frac{1}{k(N+k)}} z^{-\frac{1}{2}} e^{\frac{2\pi z}{k}\left(n-\frac{1}{24}\right) + \frac{\pi}{12 k z} } \sum_{r \in \N} \sum_{s\pmod{c}}a(r,s) e^{\frac{2 \pi i h^{\prime} m_{r,s}}{k}}q_{1}^{r} d\Phi.
\end{multline*}
By taking the absolute value of this it is possible to bound the term. Before doing that we define
\be 
a(r) :=  \sum_{s\pmod{c}}|a(r,s)|.
\ee
Note that the $a(r)$ are exactly $p(r)$ except from some constant term ambiguity. We proceed:
\begin{multline*}
\left|S_{21}\right| \leq  \sum_{r=1}^{\infty} \sum_{c\mid k}\sum_{s \pmod{c}} | a(r,s)| \left|(-1)^{ak +1} \sin\left(\frac{\pi a}{c}\right)\sum_{h} \frac{\omega_{h,k}}{\sin(\frac{\pi a h^{\prime}}{c})}e^{-\frac{\pi i a^2 k h^{\prime}}{c^2} - \frac{2\pi i h n}{k} + \frac{2 \pi i m_{r,s} h^{\prime}}{k}}\right|  \\ \times  k^{-\frac{1}{2}}n^{\frac{1}{2}}  \int_{-\frac{1}{k(N+k)}}^{\frac{1}{k(N+k)}}\left| e^{\frac{2\pi z}{k}\left(n-\frac{1}{24}\right) +\frac{\pi}{12 k z} } q_{1}^{r}\right| d\Phi 
\\ \leq \sum_{r=1}^{\infty}   \sum_{c\mid k} \sum_{s\pmod{c}}|a(r,s)|\left|(-1)^{ak +1} \sin\left(\frac{\pi a}{c}\right)\right.  \left. \sum_{h} \frac{\omega_{h,k}}{\sin(\frac{\pi a h^{\prime}}{c})}e^{-\frac{\pi i a^2 k h^{\prime}}{c^2} - \frac{2\pi i h n}{k} + \frac{2 \pi i m_{r,s} h^{\prime}}{k}}\right| \\ \times k^{-\frac{1}{2}}n^{\frac{1}{2}} e^{2\pi + \frac{\pi}{12n}} e^{-\pi r} \int_{-\frac{1}{k(N+k)}}^{\frac{1}{k(N+k)}}d\Phi .
\end{multline*}
Using Lemma \ref{Lem1} (2) we may bound this up to a constant by
\begin{multline*}
\sum_{r=1}^{\infty}\sum_{k}\sum_{s \pmod{c}} |a(r,s)| e^{-\pi r}\left(24n - 1, k\right)^{\frac{1}{2}} n^{\frac{1}{2}} k^{-\frac{1}{2}} k^{\frac{1}{2} + \varepsilon}k^{-1} n^{-\frac{1}{2}} \\
=  \sum_{r=1}^{\infty} |a(r)| e^{-\pi r} \sum_{k} k^{-1 + \varepsilon}\left(24n - 1, k\right)^{\frac{1}{2}} \ll \sum_k k^{-1 + \varepsilon} \left(24n - 1, k\right)^{\frac{1}{2}} \\ \ll \sum_{k \leq N} k^{-1 + \varepsilon}\sum_{ d \mid k \atop { d \mid 24n-1}} d^{\frac{1}{2}} 
  \ll  \sum_{ d \mid 24n -1 \atop { d\leq N }} d^{\frac{1}{2}} \sum_{k\leq N/d }(k d)^{-1 + \varepsilon} \\\ll \sum_{ d \mid 24n -1 \atop { d\leq N }} d^{-\frac{1}{2}} \sum_{k\leq N/d }k^{-1} N^{\varepsilon} \ll n^{\varepsilon} \sum_{ d \mid 24n -1 \atop { d\leq N}}d^{-\frac{1}{2}} \ll n^{\varepsilon}.
\end{multline*}
We conclude that $S_{21} = O(n^{\varepsilon})$. $S_{22}$ and $S_{23}$ are bounded in the same way and so we just consider $S_{22}$. We can rewrite the integral in the following way
\begin{equation} \label{split}
\int_{-\frac{1}{k(k_1 + k)}}^{-\frac{1}{k(N + k)}} = \sum_{\ell=k_1 + k}^{N+k -1}
\int_{-\frac{1}{k\ell}}^{-\frac{1}{k(\ell+1)}}.
\end{equation}
Plugging in this splitting of the integral we obtain the bound
\begin{align*}
\left|S_{22}\right| \leq \left|  \sum_{r=0}^{\infty}  \sum_{c \mid k}\sum_{s \pmod{c}}\right. a(r,s)\sum_{\ell= \tilde{k}_1 + k}^{N+k-1}
\int_{-\frac{1}{k\ell}}^{-\frac{1}{k(\ell+1)}} z^{-\frac{1}{2}}q_{1}^{r} e^{\frac{2 \pi }{12kz} + \frac{2 \pi z}{k}\left(n-\frac{1}{24}\right)}d\Phi  \\ \left. (-1)^{ak + 1} \sin\left(\frac{\pi a }{c}\right) \sum_{h}\frac{\omega_{h,k}}{\sin\left(\frac{\pi a h^{\prime}}{c}\right)}e^{-\frac{\pi i a^2 k h^{\prime}}{c^2}}e^{-\frac{2\pi i h n}{k}}e^{\frac{2 \pi i m_{r,s} h^{\prime}}{k}} \right| =:A .
\end{align*} 
We use the condition $ N < k + k_1 \leq \ell$ and so we can rearrange the summation from $\sum_{\ell= k_1 + k}^{N+k -1}$ to $ \sum_{\ell= N + 1}^{N+k -1}$, but we also have to rewrite the sum over $h$ to count all the terms that contribute:  
\begin{align}\label{A2}
A = \left|  \sum_{r=0}^{\infty}  \sum_{c \mid k}\sum_{s \pmod{c}}\right. a(r,s)\sum_{\ell= N + 1}^{N+k -1}
\int^{-\frac{1}{k(\ell+1)}}_{-\frac{1}{k\ell}} z^{-\frac{1}{2}} q_{1}^{r} e^{\frac{2 \pi }{12kz} + \frac{2 \pi z}{k}\left(n-\frac{1}{24}\right)}d\Phi \nonumber \\ \left.(-1)^{ak + 1} \sin\left(\frac{\pi a }{c}\right) \sum_{h \atop {N < k + \tilde{k}_1 \leq  \ell}}\frac{\omega_{h,k}}{\sin(\frac{\pi a h^{\prime}}{c})}e^{-\frac{\pi i a^2 k h^{\prime}}{c^2}}e^{-\frac{2\pi i h n}{k}}e^{\frac{2 \pi i m_{r,s} h^{\prime}}{k}}  \right|.
\end{align}
Now by the theory of Farey fractions we have
\be 
k_1 \equiv - h^{\prime} \pmod{k} \, , \, k_2 \equiv  h^{\prime} \pmod{k} \, , \, N-k \leq k_i \leq N, 
\ee for $i =1,2$. This can be seen by \cite{Ap}, Theorem 5.4 where it is proven that adjacent Farey fractions fulfill some unimodular relations that are equivalent to the above statement. We see that it is possible to use Lemma \ref{Lem1}(2) to bound contributions of \eqref{A2} and with that also $S_{22}$. This is done like in the $S_{21}$ case by using the facts listed above and using the same bounds. The only difference is that we need to be careful about the bound of the sum over the different integrals. An easy calculation shows that the following bound can be obtained: 
\be 
\sum_{\ell = N +1}^{N+k-1}\int_{-\frac{1}{k \ell}}^{-\frac{1}{k \left( \ell+1\right)}} d\Phi \leq \frac{2}{k\sqrt{n}}
\ee
So all the terms can be bounded the same way. Thus, we obtain the same result:
\begin{equation*}
S_{21} =O(n^{\varepsilon}); \, S_{22} = O(n^{\varepsilon});\, S_{23} = O(n^{\varepsilon}).
\end{equation*}
So $\Sigma_1$ is equal to: 
\be
i \sin\left(\frac{\pi a}{c}\right) \sum_{h,k \atop { c \mid k}} \omega_{h,k} \frac{(-1)^{ak+1}}{\sin(\frac{\pi a h^{\prime}}{c})} e^{-\frac{\pi i a^2 k h^{\prime}}{c^2} - \frac{2 \pi i h n}{k}}  \int_{-\vartheta_{h,k}^{\prime}}^{\vartheta_{h,k}^{\prime \prime}} z^{-\frac{1}{2}}e^{\frac{2 \pi z}{k}\left(n - \frac{1}{24}\right)+ \frac{\pi}{12kz}}d \Phi  + O(n^{\varepsilon}).
\ee
Next we want to analyze $S_1$. Therefore we use a similar trick like (\ref{split}) to split the integral:
\be \label{intsplit2}
\int_{-\vartheta_{h,k}^{\prime}}^{\vartheta_{h,k}^{\prime \prime}} = \int_{-\frac{1}{kN}}^{\frac{1}{kN}}-\int_{-\frac{1}{kN}}^{-\frac{1}{k(k+ k_1)}}-\int_{\frac{1}{k(k+ k_2)}}^{\frac{1}{kN}}. 
\ee 
and denote  by $S_{11}, S_{12}, S_{13}$ the corresponding sums. It is possible to show that $S_{12}$ and $S_{13}$ contribute to the error term. We begin with $S_{12}$. Similar to the analysis of the error terms of $S_2$ we write for the integral:
\be 
\int_{-\frac{1}{kN}}^{-\frac{1}{k(k+ k_1)}}= \sum_{\ell=N}^{k + k_1 -1}\int_{-\frac{1}{k\ell}}^{-\frac{1}{k(\ell+1)}}.
\ee
Plugging into $S_{12}$ gives:
\begin{align*}
S_{12} = \sum_{c \mid k} \sum_{\ell=N}^{k + k_1 -1}\int_{-\frac{1}{k\ell}}^{-\frac{1}{k(\ell+1)}}z^{-\frac{1}{2}}e^{\frac{\pi}{12kz} + \frac{2\pi z}{k}\left(n-\frac{1}{24}\right)}d \Phi \frac{\sin\left(\frac{\pi a}{c}\right)}{(-1)^{ak+1}}\sum_{h}\frac{\omega_{h,k}}{\sin\left(\frac{\pi a h^{\prime}}{c}\right)}e^{-\frac{\pi i a^2 h^{\prime} k}{c^2} - \frac{2 \pi i h n}{k}}.
\end{align*}
Now due to the condition $k_1 \leq N$ we have that $\ell \leq k +k_1 -1 \leq N+ k-1$ which restricts the summation over $h$. We can now bound $S_{12}$ by summing over more integrals:
\begin{multline*}
|S_{12}| \leq  \sum_{c \mid k} \sum_{\ell=N}^{k +N -1}\int_{-\frac{1}{k\ell}}^{-\frac{1}{k(\ell+1)}}\left| z^{-\frac{1}{2}}e^{\frac{\pi}{12kz} + \frac{2\pi z}{k}\left(n-\frac{1}{24}\right)}\right| d \Phi \\ \times \left| \sin\left(\frac{\pi a}{c}\right)(-1)^{ak+1}\sum_{h \atop {\ell \leq k + k_1 -1 \leq N- k-1} }\frac{\omega_{h,k}}{\sin\left(\frac{\pi a h^{\prime}}{c}\right)}e^{-\frac{\pi i a^2 h^{\prime} k}{c^2}}e^{\frac{2 \pi i h n}{k}}\right| 
= O(n^{\varepsilon}),
\end{multline*}
using again Lemma \ref{Lem1} and the facts listed at the beginning of the proof.
In the next step we detect the main contributions from the second sum $\Sigma_2$ in the Circle Method. We rewrite  $\Sigma_2$ in such a way that it is easy to see if certain terms contribute to the main part using geometric series, which is possible because $|q_1|<1$: 
\begin{multline*}
C\left(a h^{\prime}, l,c ; q_1 \right) =\frac{i}{2(q_1)_{\infty}}\left(\sum_{m=0} \frac{(-1)^m e^{-\frac{\pi i a h^{\prime}}{c}}q_{1}^{\frac{m^2+m}{2} +\frac{l}{2c}}}{1-e^{-\frac{2 \pi i a h^{\prime}}{c}}q_{1}^{m + \frac{l}{c}}} \right. 
- \left. \sum_{m=1} \frac{(-1)^m e^{\frac{\pi i a h^{\prime}}{c}}q_{1}^{\frac{m^2+m}{2} -\frac{l}{2c}}}{1-e^{\frac{2 \pi i a h^{\prime}}{c}}q_{1}^{m - \frac{l}{c}}}\right) \\ 
= \frac{i}{2(q_1)_{\infty}}\left(\sum_{m=0} \right. (-1)^m \sum_{r=0}e^{-\frac{\pi i a h^{\prime}}{c} -\frac{2 \pi i r a h^{\prime}}{c}}q_{1}^{\frac{m}{2}(m+1) +\frac{l}{2c}+ rm +\frac{rl}{c}} \\ -\sum_{m=1} (-1)^m \sum_{r=0} e^{\frac{\pi i a h^{\prime}}{c} + \frac{2 \pi i a r h^{\prime}}{c}}\left. q_{1}^{\frac{m}{2}(m+1) -\frac{l}{	2c}+ rm -\frac{rl}{c}}\right).
\end{multline*}
From this expression and the following explanation we can write
\begin{equation} \label{Main1}
e^{-\frac{\pi i a^2 h^{\prime} k}{c^2}+\frac{2 \pi i h^{\prime} l a}{c^2}+ \frac{\pi}{12 k z} }q_{1}^{\frac{-l^2}{2 c^2}}C\left(ah^{\prime},l,c; q_1\right)=: \sum_{r \geq r_0}\sum_{s \pmod{c}} b(r,s) e^{\frac{2 \pi i m_{r,s} h^{\prime}}{k}} q_{1}^{r}.
\end{equation}
We next explain that $m_{r,s} \in \Z$ and $r_0$ is possibly negative. The part with negative $r$  contributes to the main part. We rewrite \eqref{Main1} further by using $1/(q_1)_{\infty} = 1 + O(q_{1})$ inside of $C(ah^{\prime},l,c; q_1)$. So, the main contribution of 
\be 
e^{-\frac{\pi i a^2 h^{\prime} k}{c^2}+\frac{2 \pi i h^{\prime} l a}{c^2}+ \frac{\pi}{12 k z} }q_{1}^{\frac{-l^2}{2 c^2}}C\left(ah^{\prime},l,c; q_1\right)
\ee
comes from the following expression:
\begin{align}\label{A}
\pm\frac{i}{2}e^{-\frac{\pi i a^2 h^{\prime} k}{c^2}+\frac{2 \pi i h^{\prime} l a}{c^2}+ \frac{\pi}{12 k z}}q_{1}^{\frac{-l^2}{2 c^2}}(-1)^m  q_{1}^{\frac{m}{2}(m+1) \pm \frac{l}{2c}+ rm \pm \frac{rl}{c}}e^{\mp \frac{\pi i a h^{\prime}}{c} \mp \frac{2 \pi i a h^{\prime}r}{c}}.
\end{align} 
From this is possible to split the expression into the roots of unity and to the part that depends on the variable $z$. The roots of unity look like
\be 
\exp\left(\frac{2 \pi i h^{\prime}}{k}\left(-\frac{a^2 k^2 }{2 c^2} +\frac{l a k}{c^2}- \frac{l^2}{2 c^2}+ rm \pm \frac{rl}{c} \mp \frac{k r a}{c} + \frac{m(m +1)}{2} \pm \frac{l}{2 c} \mp \frac{ak}{2c}\right)\right).
\ee 
Rewriting the expression in the second bracket, using  the congruence condition $ l \equiv a k \pmod{c}$, $l^2 \pm l$ is always even and rearranging the sum it is possible to show that the contribution of the roots of unity looks like $\exp\left(\frac{2 \pi i h^{\prime}m_{r,s}}{k}\right)$ where $m_{r,s}$ is a sequence in $\Z$. The interesting part happens for $ \exp\left(\frac{\pi}{kz}T\right)$, where $T$ is defined in the following way:
\be
T:= \frac{l^2}{c^2} + \frac{1}{12} - 2rm \mp 2r \frac{l}{c} - m(m+1) \mp \frac{l}{c}.
\ee
This part contributes to the circle method exactly if $T >0$ which is equivalent to $ -T < 0$. Firstly we treat the case with the plus sign in \eqref{A}. By multiplying by (-1) and assuming $m >0$ it is possible to show
\be 
-T =-\frac{l^2}{c^2}-\frac{1}{12} + 2rm + 2r \frac{l}{c} + m(m+1) + \frac{l}{c} > -1 -\frac{1}{12} + 2 +1 > 1 > 0.
\ee 
So, $-T > 0$ and this gives for all $r$ no contribution to the Circle Method. 
For $m=0$ define  $r$ to be a solution to the following inequality:
\be 
-\frac{l^2}{c^2} -\frac{1}{12} + 2 r \frac{l}{c} + \frac{l}{c} < 0.
\ee
This is equivalent to $T > 0$ and so this contributes to the main part in the Circle Method.
Now choosing the minus sign in the equation \eqref{A} that becomes
\be 
T = -\frac{l^2}{c^2} - \frac{1}{12} + 2 rm  -2 r \frac{l}{c} + m(m+1) - \frac{l}{c}.
\ee
Assuming that $m \geq 2$, it is possible to show that $ -T > 3 >0$ and this gives no contribution. For $m=1$ we define $f: [0,1] \rightarrow \R$ by
\be 
f(x):= -x^2 - x(1 + 2r) -\frac{1}{12} +2 +2r.
\ee
Calculating the maximum and computing the values of the function we see that on the boundary the function is negative, i.e., $f(1)= - \frac{1}{12} < 0$. Thus this contributes to the main part in the Circle Method. So there are two contributions coming from each of the two terms of $C(ah^{\prime},\frac{lc}{c_1},c ;q_1)$. The first one comes from the first sum, if $m=0$, and this contributes with
\be 
\frac{i}{2}e^{-\frac{\pi i a^2 h^{\prime} k}{c^2}+\frac{2 \pi i h^{\prime} l a}{c^2}- \frac{\pi i h^{\prime} a}{c}+ \frac{\pi}{12 k z} }q_{1}^{\frac{-l^2}{2 c^2} + \frac{l}{2 c}} \sum_{r \geq 0 \atop { \delta_{a,c,k,r}^{+} > 0}} e^{-\frac{2 \pi i h^{\prime} a r}{c}} q_{1}^{\frac{rl}{c}}.
\ee
The second contribution comes from the second sum, if $m=1$, and this contributes with
\be 
e^{-\frac{\pi i a^2 h^{\prime} k}{c^2}+\frac{2 \pi i h^{\prime} l a}{c^2}+ \frac{\pi i h^{\prime} a}{c}+ \frac{\pi}{12 k z} }q_{1}^{\frac{-l^2}{2 c^2} + \frac{l}{2 c}+1} \sum_{r \geq 0 \atop { \delta_{a,c,k,r}^{-} > 0}} e^{\frac{2 \pi i h^{\prime} a r}{c}} q_{1}^{r\left(1-\frac{l}{c}\right)}.
\ee
Thus we have for the leading order of $\Sigma_2$ the following expression:
\be 
2 \sin\left(\frac{\pi a}{c}\right)\sum_{k,r \atop {c \nmid k \atop { i \in \lbrace -,+ \rbrace }}}(-1)^{ak+l}\sum_{h}\omega_{h,k}e^{\frac{2 \pi i}{k}\left(- n h + m_{a,c,k,r}^{i} h^{\prime}\right)}\int_{-\vartheta_{h,k}^{\prime}}^{\vartheta_{h,k}^{\prime \prime}}z^{-\frac{1}{2}} e^{\frac{2 \pi z}{k}\left(n - \frac{1}{24}\right) + \frac{2 \pi}{kz}\delta_{a,c,k,r}^{i}} d \Phi.
\ee
Now it is possible to rewrite the sum over $k$ into the sum where the $k$'s have the same values for $c_1$ and $l$ and thus the  $\delta_{a,c,k,r}^{i}$ are constant in each class and the condition $\delta_{a,c,k,r}^{i}>0$ is independent of $k$ in each class. Moreover it is clear as $c$ and $l$ are finite numbers and for arbitrary large $r$ there do not exist any solutions to $\delta_{a,c,k,r}^{i}>0$, so that there are only finitely many solution to the inequality. That means it is possible to split the sum over $r$ into positive $\delta_{a,c,k,r}^{i}$, which by the above argument is a finite sum and into negative $\delta_{a,c,k,r}^{i}$, where the part with negative $\delta_{a,c,k,r}^{i}$ contributes to the error. By symmetrizing the integral and now using Lemma \ref{Lem1} (1) it is possible to bound all the terms exactly the same way we did for $\Sigma_1$:
\begin{align*}
\Sigma_2 = 2 \sin\left(\frac{\pi a}{c}\right)\sum_{k,r \atop {c \nmid k \atop  { \delta_{a,c,k,r}^{i}>0 \atop { i \in \lbrace -,+ \rbrace }}}}(-1)^{ak+l}&\sum_{h}\omega_{h,k}e^{\frac{2 \pi i}{k}\left(- n h + m_{a,c,k,r}^{i} h^{\prime}\right)} \\ & \times \int_{-\frac{1}{kN}}^{\frac{1}{kN}}z^{-\frac{1}{2}} e^{\frac{2 \pi z}{k}\left(n - \frac{1}{24}\right) + \frac{2 \pi}{kz}\delta_{a,c,k,r}^{i}} d \Phi + O(n^{\varepsilon}).
\end{align*}
Another way to argue is to plug in the expansion (\ref{Main1}) directly and split the sum over $r$ into positive and non-positive powers. Then by our analysis above we see that the coefficients of the expansion do not depend on $a$ and $k$, because the roots of unity are all expressions in $l/c$. So we can bound all the terms with $k$ by using Lemma \ref{Lem1} and as the $b(r,s)$ grow exactly like the partition function  with $r$ and so smaller than $\exp(-\frac{\pi r}{12 k z})$ the product of theses two quantities can also be bounded by a constant.
So at the end we have
\begin{equation}
\Sigma_2 = 2\sin\left(\frac{\pi a}{c}\right)\sum_{k,r \atop {c \nmid k \atop {\delta_{a,c,k,r}^{i} > 0 \atop { i\in \lbrace +,- \rbrace}}}}D_{a,c,k}(-n, m_{a,c,k,r}^{i})\int_{-\frac{1}{kN}}^{\frac{1}{kN}}z^{-\frac{1}{2}} e^{\frac{2 \pi z}{k}\left(n - \frac{1}{24}\right) + \frac{2 \pi}{kz}\delta_{a,c,k,r}^{i}} d \Phi + O(n^{\varepsilon})
\end{equation}
and by the analysis before
\begin{equation}
\Sigma_1 = i \sum_{c \mid k} \tilde{B}_{a,c,k}(-n,0)\int_{-\frac{1}{kN}}^{\frac{1}{kN}}z^{-\frac{1}{2}}e^{\frac{2\pi z}{k}\left(n -\frac{1}{24}\right) + \frac{\pi}{12kz}} d\Phi + O(n^{\varepsilon}). 
\end{equation}
To finish the proof we have to evaluate integrals of the following form: 
\be 
I_{k,t}:= \int_{-\frac{1}{kN}}^{\frac{1}{kN}} z^{-\frac{1}{2}}e^{\frac{2 \pi}{k}\left(z\left(n-\frac{1}{24}\right) +\frac{t}{z}\right)}d\Phi.
\ee
Substituting $z = k/n - i k \Phi$ gives
\be 
I_{k,t} =\frac{1}{ki}\int_{k/n- \frac{i}{N}}^{k/n+ \frac{i}{N}}z^{-\frac{1}{2}}e^{\frac{2 \pi}{k}\left(z\left(n-\frac{1}{24}\right) +\frac{t}{z}\right)}dz.
\ee
We introduce the circle through the complex conjugated points $k/n \pm i/N$ which is tangent to the imaginary axis at 0 and denote this circle by $\Gamma$. Writing a complex number on the circle by $z = x + iy$ we have as a circle equation $x^2 + y^2 = \alpha x$ with $\alpha = \frac{k}{n} + \frac{n}{N^2k}$. On the smaller arc, that is the arc going from the two complex conjugated points through zero, we clearly have $\textrm{Re}(z) \leq \frac{k}{n}$ , $\textrm{Re}\left(z^{-1}\right)< k$ and $2 > \alpha > \frac{1}{k}$. From evaluating the integral on the smaller arc we get that the integral is bounded by $O(n^{-\frac{1}{8}})$\footnote{ we will make this statement more precise in the next section, see (\ref{errint})}. So it possible to change the path of integration to the larger arc because we have no singularities enclosed by the larger arc anymore. So by Cauchy's Theorem we obtain: 
\be
I_{k,t} = \int_{\Gamma} z^{-\frac{1}{2}} e^{\frac{2 \pi}{k}\left(z\left(n-\frac{1}{24}\right) + \frac{t}{z}\right)}dz + O(n^{-\frac{1}{8}}). 
\ee
Transforming the circle to a straight line by $ s = \frac{2 \pi r}{kz}$ gives:
\be 
I_{k,t} = \frac{2 \pi}{k} \left( \frac{2 \pi t}{k}\right)^{1/2} \frac{1}{2\pi i}\int_{\gamma - i\infty}^{\gamma + i\infty} s^{-\frac{3}{2}} e^{s + \frac{\beta}{s}}ds + O\left(n^{-\frac{1}{8}}\right),
\ee 
where $\gamma \in \R$ and $\beta = \frac{\pi^2 t}{6k^2}(24n-1)$. By the Hankel integral formula \cite{B} we get
\be 
I_{k,t} = \frac{4\sqrt{3}}{\sqrt{k(24n-1)}}\sinh\left(\sqrt{\frac{2t(24n-1)}{3}}\frac{\pi}{k}\right) + O \left(n^{-\frac{1}{8}}\right).
\ee
Now at the end we have 
\begin{align*}
\Sigma_2 + \Sigma_1 & =  2\sin\left(\frac{\pi a}{c}\right)\sum_{k,r \atop {c \nmid k \atop {\delta_{a,c,k,r}^{i} > 0 \atop { i\in \lbrace +,- \rbrace}}}}D_{a,c,k}\left(-n, m_{a,c,k,r}^{i}\right)\int_{-\frac{1}{kN}}^{\frac{1}{kN}}z^{-\frac{1}{2}} e^{\frac{2 \pi z}{k}\left(n - \frac{1}{24}\right) + \frac{2 \pi}{kz}\delta_{a,c,k,r}^{i}} d \Phi \\ & + 
i \sum_{c \mid k} \widetilde{B}_{a,c,k}\left(-n,0\right)\int_{-\frac{1}{kN}}^{\frac{1}{kN}}z^{-\frac{1}{2}}e^{\frac{2\pi z}{k}\left(n -\frac{1}{24}\right) + \frac{\pi}{12kz}} d\Phi  + O(n^{\varepsilon})
\end{align*}
finishing the proof of Theorem \ref{main1} after inserting the expressions for $I_{k,t}$.
\end{proof}
Now let $M(a,c;n)$ be the number of partitions of $n$ with crank equal to $a$ modulo $c$. From the Theorem \ref{main1} it is now easy to give asymptotics for the functions $M(a,c;n)$:
\begin{proof}[Proof of Corollary \ref{corblabla}]
The corollary follows easily from the following identity
\begin{equation} \label{Cmod2}
\sum_{n=0}^{\infty} M(a,c;n)q^n = \frac{1}{c} \sum_{n=0}^{\infty}p(n)q^n + \frac{1}{c}\sum_{j=1}^{c-1} \zeta_c^{-aj} C(\zeta_c^j;q),
\end{equation}
the Rademacher formula (\ref{part}) for $p(n)$ and Theorem \ref{main1}
\end{proof}
\section{Inequalities of Crank differences}
Here we give a proof of Theorem \ref{blabla} and obtain inequalities for $c< 11$.
\begin{proof}[Proof of Theorem \ref{blabla}]
Firstly, we define
$$
\rho_j(a,b,c):=\left( 
\cos \left( \frac{2 \pi a j}{c} \right)
- \cos \left(\frac{2 \pi b j}{c} \right)
\right).
$$
It is possible to write the crank differences as (see (\ref{Cmod2}))
\begin{equation} \label{rootsum}
\sum_{n} \left( M(a,c;n)-M(b,c;n)\right)q^n
=
\frac{2}{c} \sum_{j=1}^{\frac{c-1}{2}} 
\rho_j(a,b,c)
C\left( \zeta_{c}^{j};q\right),
\end{equation}
where we define $\zeta_{c} =e^{\frac{2\pi i}{c}}$.
We deduce the asymptotic behavior of (\ref{rootsum}) using Theorem \ref{main1}. So we insert Theorem \ref{main1} into the equation (\ref{rootsum}) and get directly
\begin{equation} \label{crankdifference}
M(a,c;n) - M(b,c;n) = 
\sum_{j=1}^{\frac{c-1}{2}} 
\left(
S_j(a,b,c;n) + \sum_{ i \in \lbrace -,+\rbrace}T_j^i(a,b,c;n) +O\left(n^{\varepsilon}\right)\right)
\end{equation}
where we have 
\begin{equation} \label{Sj}
S_j(a,b,c;n):= 
 \rho_j(a,b,c)     
  \frac{8\sqrt{3} i }{ c \sqrt{24n-1}} \sum_{1 \leq k \leq \sqrt{n} \atop c|k} 
\frac{\widetilde{B}_{j,c,k}(-n,0)}{\sqrt{k}}  
\sinh \left(\frac{\pi  \sqrt{24n-1} }{6k}\right),
\end{equation}
\begin{align} \label{Tj}
T_j^i(a,b,c;n):=  
\rho_j(a,b,c) & 
 \frac{ 16 \sqrt{3}   \cdot   \sin \left(\frac{\pi j}{c} \right) }{c\sqrt{24n-1}} \nonumber \\ 
    & \times \sum_{1 \leq k\leq \sqrt{n}\atop {c \nmid k\atop {r \geq 0  \atop {\delta_{j,c,k,r}^{i}>0 }} }} 
\frac{D_{j,c,k}(-n,m_{j,c,k,r}^{i})}{\sqrt{k}}   
 \sinh \left( 
 \frac{\pi\sqrt{2 \delta_{j,c,k,r}^{i}(24n-1)}}{\sqrt{3}k}
 \right).
\end{align}
This looks similar to the rank case treated in \cite{BB1}. \\
\emph{ The main term}: \\ 
Firstly, we detect the main contribution coming from the hyperbolic sine. It is a strictly increasing function and so we have to detect the largest argument. In $S_j$ the condition $ c\mid k$ has to be fulfilled and so the largest argument occurs if $k=c$. We show using $\delta^{+}_{j,c,k,r}$ with $k=1, r=0$ and $j=1$ that the argument of the hyperbolic sine of $S_j$ is always smaller then the argument of the hyperbolic sine of $T_j^i$. As $ c \mid k$ we have to show that:
\be 
\sqrt{\frac{1}{3c^2}-\frac{1}{3c} +\frac{1}{36}}> \frac{1}{6c}.
\ee 
By squaring both sides and solving a polynomial equation we see that this is equivalent to $c >11$. So for $ c> 11$ the main contribution to the crank differences comes from $T_j^{i}$, thus we have to detect the largest argument occurring in the $T_j^{i}$. To see what is the largest argument we compare $\delta_{j,c,k,r}^{+}$ and $\delta_{j,c,k,r}^{-}$. First of all it is clear that the largest argument occurs if $r=0$ for fixed $j,k$, hence we set $r=0$ and see that $\delta_{j,c,k,0}^{-} < \delta_{j,c,k,0}^{+}$, because $ 0 < l< c$. Assuming  $\frac{l}{c} < \frac{1}{2}$, which we may do by the symmetry of the parabola in the argument $l/c$ we see that $\delta_{j,c,k,0}^{+} \leq \delta_0:= \frac{1}{2c^2} + \frac{1}{24} - \frac{1}{2c}$. For $k =1$ we get $ l=j$ and so if $j \neq 1$ we have $\delta_{j,c,1,0}^{i} < \delta_0$ if $j \neq 1$. This implies that the largest argument occurs for $k=1,\,r=0$ and $j=1$. So the main contribution is 
\be 
T_{1}^{+}(a,b,c,n)= \frac{2}{c}\rho_{1}(a,b;c)\frac{8 \sqrt{3}\sin\left(\frac{\pi}{c}\right)}{\sqrt{24n-1}}\sinh\left(\frac{\pi \sqrt{2\delta_{0} (24n-1)}}{\sqrt{3}}\right).
\ee
From this it is already possible to deduce the theorem, because for sufficiently large $n$ the main contribution comes from $T_1^{+}$. The sign of $T_1^+$ is determined by the sign of the $\rho_1$ which is positive since we have $ 0 < \frac{\pi}{c} < \frac{\pi}{11} < \frac{\pi}{2} $ which implies that in this range the $\cos(\pi x/c)$ is decreasing. Thus for $ 0 < a<b < \frac{c-1}{2}$ we have that $ \cos\left(\frac{\pi a}{c}\right) > \cos \left(\frac{\pi b}{c}\right)$ and that explains why $\rho_1$ is positive. So for sufficiently large $n$ we have $N(a,c;n) > N(b,c;n)$. The next step is to clarify what sufficiently large exactly means by bounding all the error terms explicitly in terms of $c$ and $n$, beginning with the contributions of $S_j$, $T_{j}^{i}$ for $j > 1$ and $ T^{-}_{1}$. \\
\emph{ Bounding the contributions of $S_j$:}\\
For $S_j$ it is easily seen that
\begin{align*}
|S_j(a,b,c)|  \leq &  \frac{8 |\rho_j(a,b,c)|\sqrt{3}}{c \sqrt{24n-1}} \sum_{1 \leq k \leq \sqrt{n} \atop { c\mid k }} \frac{|\tilde{B}_{j,c,k}(-n,0)|}{\sqrt{k}}\sinh\left(\frac{\pi \sqrt{24n-1}}{6k}\right) \\
\leq & 
\frac{8 |\rho_j(a,b,c)|\sqrt{3}}{c \sqrt{24n-1}}\left|\sin\left(\frac{\pi j}{c}\right)\right|\sinh\left(\frac{\pi \sqrt{24n-1}}{6c}\right)\sum_{1 \leq k \leq \sqrt{n} \atop { c\mid k }}\frac{1}{\sqrt{k}}\sum_{h=1 \atop { (h,k) =1 }}^{k} \frac{1}{|\sin(\frac{\pi h}{c})|}.
\end{align*}
Here we used that the largest argument in the hyperbolic sine occurs if $c=k$ and that $h$ and $h^{\prime}$ run over the same primitive residue classes modulo $k$ and so we changed in the summation the argument of the sine from $j h^{\prime} \rightarrow h$ and with that to another representative of the equivalence class. Here it is important to note that we are using that $c$ is prime. The inner sum can be further estimated by
\begin{equation}\label{sin}
\sum_{h=1 \atop { (h,k) =1 }}^{k} \frac{1}{|\sin(\frac{\pi h}{c})|} \leq \frac{2k}{c}\sum_{h=1}^{\frac{c-1}{2}}\frac{1}{|\sin(\frac{\pi h}{c})|}\leq  \frac{2k}{\pi}\sum_{h=1}^{\frac{c-1}{2}}\frac{1}{h\left(1 - \pi^2/24\right)}\leq \frac{2k\left(1 + \log\left(\frac{c-1}{2}\right)\right)}{\left(1 - \pi^2/24\right)} .
\end{equation}
In the first inequality it used that the absolute value of the sine is not bigger than $1$ and that $c$ is odd. In the second inequality it is used that $\sin(x) > x -x^3/6$ for $ |x|<1$ and we used that the summation runs to $(c-1)/2$ by bounding in the $x^3$-term $h$ by $c/2$. In the last step we have used $\sum_{h=1}^{\frac{c-1}{2}}h^{-1} = 1 + \sum_{h=2}^{\frac{c-1}{2}} h^{-1}$ and estimated the sum by an integral. We now have:
\begin{align*}
|S_j(a,b,c)|  \leq & \frac{16 |\rho_j(a,b,c)|\sqrt{3}}{c \sqrt{24n-1}}\frac{\left|\sin\left(\frac{\pi j}{c}\right)\right|\left(1+ \log\left(\frac{c-1}{2}\right)\right)}{\pi\left(1 -\frac{\pi^2}{24}\right)}\sinh\left(\frac{\pi \sqrt{24n-1}}{6c}\right)\sum_{1 \leq k \leq \sqrt{n} \atop { c\mid k }}k^{1/2} \\
\leq & \frac{64 n^{3/4} \left(1+ \log\left(\frac{c-1}{2}\right)\right)}{\sqrt{24n-1}c^2\sqrt{3}\pi \left(1 -\frac{\pi^2}{24}\right)} \sinh\left(\frac{\pi \sqrt{24n-1}}{6c}\right).
\end{align*}
Here it is used that $\left| \rho_j(a,b,c)\right|\leq 2$ and the following estimation of the sum:  
\be \sum_{ 1 \leq k \leq \sqrt{n} \atop {c\mid k}}k^{\frac{1}{2}} \leq c^{\frac{1}{2}}\sum_{1 \leq j \leq \lfloor \frac{N}{c}\rfloor} j^{\frac{1}{2}} \leq c^{\frac{1}{2}} \int_1^{\lfloor \frac{N}{c}\rfloor} x^{\frac{1}{2}} dx \leq \frac{2}{3c}n^{\frac{3}{4}}.
\ee
Next we want to bound the $T_j^i$ for $ j\geq 2$ and $T_1^{-}$. Firstly notice that is possible to bound $D_{j,c,k}(-n, m_{j,c,k,r}^{i})$ trivially by $k$. The reason is that we sum over roots of unity and the sum runs over all primitive residue classes modulo $k$. Moreover we can bound the hyperbolic sine by the positive part because $\sinh(x) =(e^{x}-e^{-x})/2$. \\
\emph{ Bounding the contributions of $T_{j}^{i}$ for $j \geq 2$}: \\
Using the exponential function we can bound the terms in the sum of $T_j^i$ in the following way (here for $k \geq 2$):
\be 
\frac{D_{j,c,k}(-n,m_{j,c,k,r}^{i})}{\sqrt{k}}\sinh\left(\frac{\pi \sqrt{2 \delta_{j,c,k,r}^i (24n-1)}}{\sqrt{3}k}\right) \leq \frac{k^{\frac{1}{2}}}{2}e^{\frac{\pi \sqrt{2 \delta_{0} (24n-1)}}{2\sqrt{3}}}.
\ee
The number of $r$ satisfying the condition $\delta_{j,c,k,r}^{i}>0$ can be bounded in terms of $c$: First of all we find the number of solutions to the equation as a function of $l$ for fixed $c$. Now define the function $g_{c}: [1,c-1] \rightarrow \R$ by $f(l)= \frac{l}{2c} + \frac{1}{2} + \frac{c}{24l}$. We added one to the equation to afterwards take the floor function. The largest values occur on the boundary of the interval, namely $l=1$ and $l=c-1$, as the function has its minimum in the interior of the interval and is a continuous function. For $c> 11$ the function take its maximum for $l=1$ and may be bounded by $\frac{(c+18)}{24}$. For the other cases we checked by hand that the number of solutions to the equation $\delta >0$, which is $\lfloor \frac{l}{2c} + \frac{1}{2} + \frac{c}{24l} \rfloor$, can be bound by $\frac{(c+18)}{24}$, where we inserted the maximizing $l=c-1$. Thus we can bound $T_j^i$ for $k \geq 2$ by 
\be
\frac{4(c+18)}{3\sqrt{3}c \sqrt{24n-1}}n^{3/4}e^{\pi \frac{\sqrt{2 \delta_0(24n-1)}}{2 \sqrt{3}}}.
\ee
Since $\delta_{j,c,,1,0} < \delta_0$ is decreasing in $j$, for $j>1$ we bound the $k=1$ contribution by the argument of $j=2$
\be 
\frac{2(c+18)}{\sqrt{3}c \sqrt{24n-1}}e^{\pi \frac{\sqrt{2 \delta_{2,c,1,0}(24n-1)}}{\sqrt{3}}}.
\ee  
Before coming to the error terms of the Circle Method we have to bound the contribution of $T_1^{-}$.\\
\emph{ Contribution of $T_{1}^{-}$}: \\
By the same analysis, it is possible to bound this term by a similar expression as the ones before. By bounding the $\sinh$ by the exponential function, using $\rho\leq 2$  and showing $\delta_{1,c,k,r}^{-}$ is smaller then $\delta_{2,c,1,0}^{+}$ and so choosing the right argument in the exponential function, $T_{1}^{-}$ can be bounded by
\be 
\frac{2(c-1)}{\sqrt{3}c \sqrt{24n-1}}e^{\pi \frac{\sqrt{2 \delta_{2,c,1,0}^{+}(24n-1)}}{\sqrt{3}}}.
\ee
Here we bounded the number of solutions to $\delta_{j,c,k,r}^{-}>0$ by $\frac{c-1}{24}$ which is a rough bound, but makes sense for all odd $c$ (We could find a sharper bound for the number of solutions, but we would have had to put an extra condition on $c$ or introduce a heavyside function that reflects the fact that there are no solutions for $c < 23$). Now we want to make the $O(n^{\varepsilon})$-term in the Theorem \ref{main1} explicit. We had $\tilde{A}\left(\frac{j}{c};n\right)= \Sigma_1 + \Sigma_2$ with $\Sigma_1 = S_1 +S_2$, where $S_2 =: S_{err}$ contributes to the error in the circle method.  \\
\emph{ Contribution of the error of $\Sigma_1$}:\\
We again use the bounds $|z| \geq \frac{k}{n} $, $\textrm{Re}(z)= \frac{k}{n}$, and $C(\zeta_{c}^{h},q_1)=1 +C(\zeta_{c}^{h},q_1)-1$ to obtain 
\be 
S_{err} \leq 2 \left|\sin\left(\frac{\pi j}{c}\right)\right|e^{2\pi}\sum_{k\leq N \atop { c \mid k}}k^{-\frac{3}{2}}\sum_{h=1 \atop { (h,k)=1}}^{k-1}\frac{1}{\sin\left(\frac{\pi h}{c}\right)}\max_{z}\left|e^{\frac{\pi}{12kz}}\left( C(\zeta_{c}^{h},q_1) - 1\right)\right|.
\ee 
Here we used again a change of variables $ j h^{\prime} \rightarrow h$.
The next step is to estimate $|e^{\frac{\pi}{12kz}}(C(\zeta_{c}^{h},q_1)-1)|$. Remember that \eqref{pochh}:
\begin{align*}
C(\zeta_{c}^{h},q_1)& = \frac{1}{(q_1)_{\infty}}+\frac{(1-\zeta_{c}^{h})}{(q_1)_{\infty}}\sum_{m\in \Z \setminus \lbrace 0 \rbrace} \frac{(-1)^m q_1^{\frac{m(m+1)}{2}}}{1 - \zeta_c^h q_1^m} \\
&=  \frac{1}{(q_1)_{\infty}}+\frac{(1-\zeta_c^h)}{(q_1)_{\infty}}\sum_{m =1}^{\infty} \frac{(-1)^m q_1^{\frac{m(m+1)}{2}}}{1 - \zeta_c^h q_1^m} +\frac{(1-\zeta_c^{-h})}{(q_1)_{\infty}}\sum_{m =1}^{\infty} \frac{(-1)^m q_1^{\frac{m(m+1)}{2}}}{1 - \zeta_c^{-h} q_1^m}.
\end{align*}
From this it is easily seen that $|e^{\frac{\pi}{12kz}}(C(\zeta_{c}^{h},q_1)-1)|$ may be bounded by
\be 
e^{\frac{\pi}{24}}\sum_{n=1}^{\infty} p(n)e^{-\pi n} + 
e^{\frac{\pi}{24}}\sum_{n=0}^{\infty} p(n)e^{-\pi n}\sum_{m=1}^{\infty}e^{-\pi m(m+1)/2}\left|\frac{1-\zeta_{c}^{h}}{1-\zeta_{c}^{h}q_1^{m}} + \frac{1-\zeta_{c}^{-h}}{1-\zeta_{c}^{-h}q_1^{m}}\right| .
\ee
Note that the summation index $n$ on the first term starts with $1$ where one the second sum with $0$, that means we absorbed the $-1$ into the sum. We bound the term further by noting that
\be 
\left|\frac{1-\zeta_{c}^{h}}{1-\zeta_{c}^{h}q_1^{m}} + \frac{1-\zeta_{c}^{-h}}{1-\zeta_{c}^{-h}q_1^{m}}\right| \leq 2 \frac{1 +|\cos \left(\frac{\pi}{c}\right)|}{1 - e^{-\pi m}}.
\ee
Defining $c_2 := \sum_{n=1}^{\infty}p(n) e^{-\pi n}$ and $c_1 := \sum_{m=1}^{\infty} \frac{e^{-\frac{\pi m(m+1)}{2}}}{1-e^{-\pi m}}$ it is possible to bound $S_{err}$ by:
\be 
2 e^{2\pi} \left|\sin\left(\frac{\pi j}{c}\right)\right|e^{\frac{\pi}{24}}\left(c_2 + 2\left(1 +\left|\cos\left(\frac{\pi}{c}\right)\right|\right) c_1(1+c_2)\right) \sum_{k\leq N \atop { c \mid k}}k^{-\frac{3}{2}}\sum_{h=1 \atop { (h,k)=1}}^{k-1}\frac{1}{|\sin\left(\frac{\pi h}{c}\right)|}.
\ee
Using (\ref{sin}) and estimating the sum over $k$ by an integral expression  we obtain after evaluating the integral the upper bound for $S_{err}$:
\be 
\frac{2 e^{2\pi + \frac{\pi}{24}} \left|\sin\left(\frac{\pi j}{c}\right)\right|\left(c_2 + 2\left(1 +|\cos\left(\frac{\pi}{c}\right)|\right) c_1(1+c_2)\right)n^{\frac{1}{4}}\left( 1 + \log\left(\frac{c-1}{2}\right)\right)}{\pi\left(1 -\frac{\pi^2}{24}\right)c} .
\ee
We continue by repeating this procedure for the $O(n^{\varepsilon})$-term of $\Sigma_2$.\\
\emph{Contribution of the error of $\Sigma_2$}:\\ 
The error corresponds to the terms where $\delta_{j,c,k,r}^{i}$ is not positive. Therefore we define $\tilde{M}(jh^{\prime}, l,c;q_1)$ to be the terms with 
positive exponents in the $q_1$-expansion of $$e^{\frac{\pi}{12kz}}q_1^{-\frac{l^2}{2c^2}}C(jh^{\prime},l,c;q_1)$$ and bound $\tilde{M}$. Writing for the entire sum $T_{err}$, using the usual bounds of $|z|$ and doing the usual change of variable $ jh^{\prime} \rightarrow h$ the following bound is easily obtained:
\be 
T_{err} \leq 8e^{2\pi}\left|\sin\left(\frac{\pi j}{c}\right)\right|\sum_{h,k \atop { c \nmid k}}k^{-\frac{3}{2}}\max_{z}\tilde{M}(h,l,c;q_1).
\ee  
The difficult bounds come from the function $\tilde{M}$. Remember that $C(h,l,c;q_1)$ has the following expansion
\begin{align*}
C(h,l,c,q_1) = \frac{i \zeta_{2c}^{-h} q_1^{\frac{l}{2c}}}{2 (q_1)_{\infty}\left(1 - \zeta_c^{-h} q_1^{\frac{l}{c}}\right)} + \frac{i \zeta_{2c}^h q_1^{-\frac{l}{2c}+ 1}}{2 (q_1)_{\infty}\left(1 - \zeta_c^h q_1^{1 -\frac{l}{c}}\right)} \\
-\frac{i \zeta_{2c}^h q_1^{-\frac{l}{2c}}}{2 (q_1)_{\infty}} \sum_{m=2}^{\infty} \frac{(-1)^m q_1^{\frac{m(m+1)}{2}}}{\left(1 -\zeta_c^h q_1^{m-\frac{l}{c}}\right)} + \frac{i \zeta_{2c}^{-h} q_1^{\frac{l}{2c}}}{2 (q_1)_{\infty}} \sum_{m=1}^{\infty} \frac{(-1)^m q_1^{\frac{m(m+1)}{2}}}{\left(1 - \zeta_c^{-h} q_1^{m-\frac{l}{c}}\right)}.
\end{align*}
We bound the contributions of $\tilde{M}$ term by term beginning with the first one. We rewrite the denominator by a geometric series and by $ \frac{1}{(q_1)_{\infty}} = \sum_{m=0}^{\infty} p(m)q_1^m$.
So we have
\be 
\frac{i \zeta_{2c}^{-h}q_1^{\frac{l}{2c}}}{2 (q_1)_{\infty}\left(1 -\zeta_{c}^{-h} q_1^{\frac{l}{c}}\right)}= \frac{i}{2}\zeta_{2c}^{-h}q_1^{\frac{l}{2c}} \sum_{m=0}^{\infty} p(m)q_1^m \sum_{r =0}^{\infty} \zeta_{c}^{-hr} q_1^{\frac{rl}{c}}.
\ee
Maximizing $|z|$ and taking the absolute value of this expression, noting that for $m=0$ not all the terms correspond to the error but for all higher $m$ they do, and using that Re$(z^{-1})\geq \frac{k}{2}$, we gain the following contribution to $\tilde{M}$
\be 
\frac{1}{2}e^{-\frac{\pi l}{2c}+ \frac{l^2}{2 c^2} + \frac{\pi}{24}}\left( \sum_{r \geq r_0} e^{-\frac{\pi l r}{c}} + \sum_{r=0}^{\infty} e^{-\frac{\pi l r}{c}}\sum_{m=1}^{\infty} p(m)e^{-\pi m} \right), 
\ee
where $r_0 := \lceil -\frac{1}{2} + \frac{l}{2c} + \frac{c}{24l}\rceil $. Using 
\be 
\sum_{r \geq r_0} e^{-\frac{\pi l r}{c}} = \frac{e^{-\frac{\pi r_0 l}{c}}}{\left( 1 -e^{\frac{-\pi l}{c}} \right)} \quad , \quad c_2= \sum_{m=1}^{\infty}p(m)e^{-\pi m}
\ee
and the usual geometric series we can bound the term further by 
\begin{align*}
\frac{e^{-\frac{\pi l}{2c}+ \frac{\pi l^2}{2 c^2} + \frac{\pi}{24}-\frac{\pi r_0 l}{c}}}{2\left( 1 -e^{\frac{-\pi l}{c}} \right)} + 
\frac{e^{-\frac{\pi l}{2c}+ \frac{l^2}{2 c^2} + \frac{\pi}{24}}c_2}{2\left( 1 -e^{\frac{-\pi l}{c}} \right)} & = \frac{e^{\frac{\pi l}{c}\left(-\frac{1}{2} +\frac{l}{2c} + \frac{c}{24l}-r_0 \right)}\left(1 + c_2 e^{\frac{\pi r_0 l}{c}}\right)}{2\left( 1 -e^{\frac{-\pi l}{c}} \right)} \\
& \leq \frac{\left( 1 + c_2 e^{\pi \delta_0} \right)}{2 \left( 1 - e^{-\frac{\pi}{c}}\right)}.
\end{align*}
The second sum can be bounded exactly the same way. In the third and fourth summand all the terms will contribute to the error as  was shown in the Theorem \ref{main1}. We obtain
\begin{align*}
\left| -\frac{i \zeta_{2c}^h q_1^{-\frac{l}{2c} + \frac{l^2}{2 c^2}} e^{\frac{\pi}{12 kz}}}{2 (q_1)_{\infty}} \sum_{m=2}^{\infty} \frac{(-1)^m q_1^{\frac{m(m+1)}{2}}}{\left(1 -\zeta_c^h q_1^{m-\frac{l}{c}}\right)} \right| \leq  & \frac{1}{2} e^{-\frac{\pi l}{2c} + \frac{\pi}{24} + \frac{\pi l^2}{2 c^2}} (c_2 +1 ) \sum_{m=2}^{\infty} \frac{e^{-\frac{\pi m(m+1)}{2}}}{1 - e^{-\pi m + \pi \frac{l}{c}}} \\ \leq & \frac{1}{2} e^{\pi \delta_0}(c_2 +1 ) c_3,
\end{align*}
where 
\be 
c_3 := \sum_{m=2}^{\infty} \frac{e^{-\frac{\pi m(m+1)}{2}}}{1 - e^{-\pi m + \pi }}.
\ee
We proceed by repeating this step for the fourth sum in the $C(h,l,c;q_1)$ expansion. As there is nothing new we omit this step and just give the bound for $\tilde{M}$. It is 
\be 
e^{\pi \delta_0} c_1( 1 +c_2).
\ee
So at the end the error terms coming from the function $\tilde{M}$ can be bounded by the following function $f(c)$ that just depends on $c$
\be 
f(c) := \frac{1 + c_2 e^{\pi \delta_0}}{\left( 1 - e^{-\frac{\pi}{c}}\right)} + e^{\pi \delta_0} c_1( 1 +c_2) +\frac{1}{2} e^{\pi \delta_0 }(c_2 +1 ) c_3.
\ee
So the error can be bounded by
\be 
T_{err} \leq 8 e^{2\pi} f(c) \sum_{h,k \atop { h \nmid k}} k^{-\frac{3}{2}} \leq 16 e^{2\pi} f(c) n^{\frac{1}{4}} \left| \sin\left(\frac{\pi j }{c}\right) \right|. 
\ee
Here it used that the sum over $h$ can be bounded trivially by $k$ because the sum runs over the residue class modulo $k$ and we estimated the sum over $k$ by an integral expression. 
As a next step we want to bound the contributions that come from symmetrizing the integral. In Theorem \ref{main1} we have shown that it is possible to split the integration over the Farey arcs by making the integral bounds symmetric and showed that the needed integral to correct this symmetrization will contribute to the error. These terms have to be made explicit. \newline 
\emph{ Symmetrizing}: \\
We have used 
\be 
\int_{-\vartheta_{h,k}^{\prime}}^{\vartheta_{h,k}^{\prime \prime}} = \int_{-\frac{1}{kN}}^{-\frac{1}{kN}} - \int_{-\frac{1}{kN}}^{-\frac{1}{k(k+k_1)}} - \int_{\frac{1}{k(k+k_1)}}^{\frac{1}{kN}}.
\ee
Plugging into the first term of the main contribution we are left with the following error term:
\begin{align*}
S_{1 err} = - i \sin\left(\frac{\pi j}{c}\right) \sum_{h,k \atop { c \mid k}} \frac{\omega_{h,k} (-1)^{ak+1}}{\sin\left(\frac{\pi j h^{\prime}}{c}\right)} e^{-\frac{\pi i j^2 k h^{\prime}}{c^2} -\frac{2 \pi i h n}{k}} \\ 
\times \left( \int_{-\frac{1}{kN}}^{-\frac{1}{k(k+k_1)}} + \int_{\frac{1}{k(k+k_1)}}^{\frac{1}{kN}} \right) z^{-\frac{1}{2}} e^{\frac{2 \pi z}{k}\left(n -\frac{1}{24}\right) + \frac{\pi}{12 k z}} d\Phi. 
\end{align*} 
Taking the absolute value and the usual bound of $|z|$ we can estimate
\begin{align*} 
|S_{1 err}|\leq & \left| \sin\left(\frac{\pi j}{c}\right)\right|e^{2\pi + \frac{\pi}{12}}\sum_{1 \leq k \leq \sqrt{n} \atop {c \mid k}} \frac{n^{\frac{1}{2}}}{k^{\frac{1}{2}}} \sum_{ h \atop {(h,k)=1}}\frac{1}{|\sin\left(\frac{\pi h}{c}\right)|}\frac{2}{kN} \\ \leq & \frac{4\left| \sin\left(\frac{\pi j}{c}\right)\right|e^{2\pi + \frac{\pi}{12}}\left( 1 + \log\left(\frac{c-1}{2}\right)\right)}{\pi \left(1 - \frac{\pi^2}{24}\right)}\sum_{1 \leq k \leq \sqrt{n} \atop {c \mid k }} k^{-\frac{1}{2}} \\ \leq & \frac{8e^{2\pi + \pi/12}\left( 1 + \log\left(\frac{c-1}{2}\right)\right)n^{\frac{1}{4}}}{\pi \left(1 - \frac{\pi^2}{24}\right)c} .
\end{align*}
Now we do the same for the second main contribution. Remember that we have to bound the following term:
\begin{align*}
S_{2 err} = 2 \sin\left(\frac{\pi a}{c}\right)\sum_{k,r \atop {c \nmid k \atop  { \delta_{a,c,k,r}^{i}>0 \atop { i \in \lbrace -,+ \rbrace }}}}(-1)^{ak+l}&\sum_{h}\omega_{h,k}e^{\frac{2 \pi i}{k}(- n h + m_{a,c,k,r}^{i} h^{\prime})} \\ & \times \left( \int_{-\frac{1}{kN}}^{-\frac{1}{k(k+k_1)}} + \int_{\frac{1}{k(k+k_1)}}^{\frac{1}{kN}}\right)z^{-\frac{1}{2}} e^{\frac{2 \pi z}{k}\left(n - \frac{1}{24}\right) + \frac{2 \pi}{kz}\delta_{a,c,k,r}^{i}} d \Phi
\end{align*} 
Completely analogously to $S_{1 err}$ it is possible to show:
\be 
S_{2 err} \leq 8 e^{2 \pi} \left| \sin\left(\frac{\pi j}{c}\right)\right|\sum_{r,k \atop { \delta_{j,c,k,r}^{i} >0 \atop { i\in \lbrace -,+\rbrace}}} k^{-\frac{1}{2}} e^{2\pi \delta_{j,c,k,r}^{i}}.
\ee 
As a next step we evaluate the sum over $r$ with $i=+$ and bound it in terms of $\delta_0$. As it is the biggest argument, we can also bound the term with $i=-$ analogously. So we restrict to the case $i=+$. The sum over $r$ gives
\begin{align*} 
S_{2 err} \leq & 16 e^{2 \pi} \left| \sin\left(\frac{\pi j}{c}\right)\right|\sum_{r,k \atop { \delta_{j,c,k,r}^{+} >0 \atop { i\in \lbrace -,+\rbrace}}} k^{-\frac{1}{2}} e^{2\pi \delta_{j,c,k,r}^{+}} \\
= & 16 e^{2 \pi} \left| \sin\left(\frac{\pi j}{c}\right)\right|\sum_{k} k^{-\frac{1}{2}} \sum_{r \leq r_0 -1} e^{- \frac{\pi l}{c} + \frac{\pi l^2}{c^2} + \frac{\pi}{12} - \frac{2 \pi l r}{c}}\\ = & 16 e^{2 \pi} \left| \sin\left(\frac{\pi j}{c}\right)\right|\sum_{k} k^{-\frac{1}{2}} \frac{e^{- \frac{\pi l}{c} + \frac{\pi l^2}{c^2} + \frac{\pi}{12} }\left( e^{-\frac{2 \pi l}{c}r_0}-1\right)}{e^{-\frac{2\pi l}{c}}-1} \\
\leq & 16 e^{2 \pi} \left| \sin\left(\frac{\pi j}{c}\right)\right|\sum_{k} k^{-\frac{1}{2}} \frac{e^{2\pi \delta_0}}{1 - e^{-\frac{2\pi}{c}}}\\
\leq & 32e^{2 \pi} n^{\frac{1}{4}} \left| \sin\left(\frac{\pi j}{c}\right)\right| \frac{e^{2\pi \delta_0} }{1 - e^{-\frac{2\pi}{c}}}.
\end{align*}
Two facts should be explained. We have used that the summation over $r$ is an error term if it starts with $r_0$, so here we have to sum over all the $r$-terms where $r \leq r_0 -1 $. As a next step we calculated the geometric series in $r$ and bounded the term similar to the $\Sigma_2$-error. Finally an estimation of the $k$-sum gives the final expression. The last contribution we have to bound is coming from the evaluation of the integral. There it is used that it is possible to change the path of integration if one accounts the integral over the smaller arc. This term has to be made explicit.\\
\emph{Integrating along the smaller arc}: \\
Remember that we had to compute integrals of the following form
\be 
I_{k,t} = \frac{1}{ki}\int_{-\frac{1}{kN}}^{\frac{1}{kN}}z^{-\frac{1}{2}} e^{\frac{2\pi}{k}\left( z\left(n-\frac{1}{24}\right) +\frac{t}{z}\right)}dz.
\ee
Now we denote the circle through $\frac{k}{n}\pm \frac{i}{N}$ and tangent to the imaginary axis at $0$ by $\Gamma$. For $ z = x + iy$, $\Gamma$ is given by $x^2 + y^2 =\frac{k}{n} +\frac{n}{N^2 k} x =: \alpha x$. The path of integration can be changed into the larger arc, while on the smaller arc we have the following bounds: $\frac{1}{k} < \alpha <2$, Re$(z) \leq \frac{k}{n}$ and Re$(z^{-1}) < k$. This can be used to bound the integral over the smaller arc which we denoted by $\Gamma_{S}$. Splitting $I_{k,t} = I_{k,t}^{main} + I_{k,t}^{err}$ we can bound $I_{k,t}^{err}$
\begin{align*}
I_{k,t}^{err} \leq & \frac{2}{k}e^{2 \pi + 2\pi t}\int_{\Gamma_{S}}|z|^{-\frac{1}{2}} dz \leq \frac{2}{k}e^{2 \pi +\frac{\pi}{12}}\left|\int_{0}^{\frac{k}{n}}(x^2 +y^2)^{-\frac{1}{4}}(dx + idy)\right| \\
= & \frac{2}{k}e^{2 \pi + 2\pi t}\left|\int_{0}^{\frac{k}{n}}(\alpha x)^{-\frac{1}{4}}(dx + idy)\right| = \frac{2}{k}e^{2 \pi + 2\pi t}\left|\int_{0}^{\frac{k}{n}}(\alpha x)^{-\frac{1}{4}}dx + i \int_{0}^{\frac{k}{n}}(\alpha x)^{-\frac{1}{4}}dy\right| \\
= & \frac{2}{k}e^{2 \pi + 2 \pi t} \alpha^{-\frac{1}{4}}\left|\int_{0}^{\frac{k}{n}} x^{-\frac{1}{4}}dx + i \int_{0}^{\frac{k}{n}} x^{-\frac{1}{4}}dy\right| \\ = & \frac{2}{k}e^{2 \pi + 2 \pi t} \alpha^{-\frac{1}{4}}\left|\frac{4}{3}\left(\frac{k}{n}\right)^{\frac{3}{4}} + i \int_{0}^{\frac{k}{n}} x^{-\frac{1}{4}} \dfrac{dy}{dx}dx\right|  \\ 
= & \frac{2}{k}e^{2 \pi + 2 \pi t} \alpha^{-\frac{1}{4}}\left|\frac{4}{3}\left(\frac{k}{n}\right)^{\frac{3}{4}} + i \int_{0}^{\frac{k}{n}} x^{-\frac{3}{4}} \frac{\alpha - 2x}{2 \sqrt{\alpha-x}}dx\right|.
\end{align*} 
Next we compute the derivative of $f: [0,\alpha] \rightarrow \R$ defined by $ x \rightarrow \frac{\alpha -2x}{2 \sqrt{\alpha -x}}$ and see that the derivative is negative for $x < \frac{3}{2}\alpha$ and so for all $x \in [0,\alpha]$. That means that the function $f(x)$ has its maximum at $ x=0$ and so we can bound further:
\begin{align}\label{errint}
I_{k,t}^{err}\leq & \frac{2}{k}e^{2 \pi + 2\pi t} \alpha^{-\frac{1}{4}}\left|\frac{4}{3}\left(\frac{k}{n}\right)^{\frac{3}{4}} + i\alpha^{\frac{1}{2}}\int_{0}^{\frac{k}{n}}x^{-\frac{3}{4}}dx \right| \nonumber \\ \leq &
\frac{2}{k}e^{2 \pi +2 \pi t} \left( \frac{4}{3}\left(\frac{k}{n}\right)^{\frac{3}{4}} \alpha^{-\frac{1}{4}} + 2 \alpha^{\frac{1}{4}}\left(\frac{k}{n}\right)^{\frac{1}{4}} \right) \nonumber \\ \leq & \frac{2}{k}e^{2 \pi + 2\pi t}\left( \frac{4}{3} + 2^{\frac{5}{4}}\right) n^{-\frac{1}{8}} .
\end{align}
Here it is used that $k \leq \sqrt{n}$ and so $\frac{k}{n} \leq n^{-1/2}$, $\alpha <2$. Combining the contributions from $\Sigma_1$, using usual formulas like (\ref{sin}),  and estimation of the sum over $k$, we can bound the whole contribution by ($t=1/24$):
\be 
\frac{4\left(\frac{4}{3} +2^{\frac{5}{4}}\right) \left|\sin\left(\frac{ \pi j}{c} \right)\right|\left(1 + \log\left(\frac{c-1}{2}\right)\right)e^{2\pi + \frac{\pi}{12}}n^{\frac{3}{8}}}{\pi c \left(1 -\frac{\pi^2}{24}\right)}.
\ee
The same can be done for $\Sigma_2$,
\begin{align*} 
& \, 4 \left( \frac{4}{3} + 2^{\frac{5}{4}}\right)\left|\sin\left(\frac{\pi j}{c} \right)\right| \frac{e^{2\pi \delta_0 +2\pi}}{1-e^{-\frac{2\pi}{c}}}n^{-\frac{1}{8}}\sum_{k\leq \sqrt{n}}\frac{1}{k} \\ \leq & \, 4 \left( \frac{4}{3} + 2^{\frac{5}{4}}\right)\left|\sin\left(\frac{\pi j}{c} \right)\right|\frac{e^{2\pi \delta_0 +2\pi}}{1-e^{-\frac{2\pi}{c}}}\left( 1 + \log(\sqrt{n})\right)n^{-\frac{1}{8}} \\ \leq & \, 8 \left( \frac{4}{3} + 2^{\frac{5}{4}}\right)\left|\sin\left(\frac{\pi j}{c} \right)\right|\frac{e^{2\pi \delta_0 +2\pi}}{1-e^{-\frac{2\pi}{c}}} ,
\end{align*}
where it is used that the $\log(x)$ grows more slowly than any positive power of $x$. That means that we can bound the contribution of $n$. Explicitly we calculated the maximum of $f(x)= (1+\log(\sqrt{x}))x^{-\frac{1}{8}}$ and the maximal value can be bounded by $2$.\\ 
\emph{The explicit constant}:\\
Denoting the different error terms by $\tilde{\Sigma}_{err,j}$ and the main part by $T_1^{+}$ we can conclude that $$N_{a,b,c} = \min \left\lbrace n \in \N \left| T_{1}^{+}(a,b,c,n) - \sum_{j} \tilde{\Sigma}_{err,j}(c,n) > 0 \right. \right\rbrace .$$ 
This finishes the proof of the theorem.  
\end{proof}
For $c<13$ the $S_j$ will give the main contributions to the circle method as noticed in the last theorem. As the sign of $S_j$ depend on the sign of $\tilde{B}_{j,c,k}$ and the $\tilde{B}_{j,c,k}$ oscillate we will have the following
\begin{corollary}\label{rest}
For $n > \tilde{N}_{a,b,c}$ where $\tilde{N}_{a,b,c}$ is an explicit constant we have
\begin{enumerate}
\item If $ 0 \leq a < b \leq 2$, then the difference $M(a,5,5n+d)-M(b,5,5n+d)$ is
$$
\left\{ 
\begin{array}{ll}
<0&\text{if }  (a,b,d) \in \left\{(0,b,1),(0,2,2),(1,2,2),(1,2,3) \right\},\\[1 ex]
>0&\text{if }  (a,b,d) \in \left\{(0,b,0),(1,2,1), (0,1,3) \right\}.
\end{array}
\right.
$$
\item  If $ 0 \leq a < b \leq 3$, then the difference $M(a,7,7n+d)-M(b,7,7n+d)$ is
$$
\left\{ 
\begin{array}{ll}
<0&\text{if }  (a,b,d) \in \left\lbrace
(0,1,1),(0,1,6),(0,2,1),(0,2,2),(0,3,1),(0,3,6),  \right. \\[1 ex]
&
\left.
(1,2,2),(1,2,4),(1,3,3),(1,3,4),(2,3,3),(2,3,6) 
\right\rbrace \\ [1 ex]
 >0&\text{if }  (a,b,d) \in \left\lbrace
(0,1,0),(0,1,3),(0,1,4),(0,2,0),(0,2,3),(0,3,0),
\right.\\[1ex]
&
\left.
(1,2,1),(1,2,6), (1,3,1), (2,3,2)  \right\rbrace.
\end{array}
\right.
$$
\item  If $ 0 \leq a < b \leq 4$, then the difference $M(a,9,3n+d)-M(b,9,3n+d)$ is
$$
\left\{ 
\begin{array}{ll}
<0&\text{if }  (a,b,d) \in \left\{
(0,1,1),(0,1,6),(0,1,8),(0,2,1),(0,2,2),(0,2,6) \right\},\\[1 ex]
&
\left.
(0,3,1),(0,3,3),(0,3,6),(0,4,1),(0,4,6) ,(0,4,8)
\right. \\ [1 ex]
&
\left.
(1,2,2),(1,2,4),(1,2,7),(1,3,2),(1,3,3),(1,3,4) 
\right. \\ [1 ex]
&
\left.
(1,3,5),(1,3,7),(1,4,4),(1,4,7),(2,3,1),(2,3,3) 
\right.\\ [1 ex]
&
\left.
(2,3,5),(2,3,7),(2,3,8),(2,4,5),(2,4,8),(3,4,0) 
\right. \\ [1 ex]
&
\left.
(3,4,4),(3,4,6),(3,4,8) 
\right\rbrace, \\ [1 ex]
 >0&\text{if }  (a,b,d) \in \left\{
(0,1,0),(0,1,2),(0,1,3),(0,1,4),(0,1,5),(0,1,7), 
\right.\\[1ex]
&
\left.
(0,2,0),(0,2,3),(0,2,4),(0,2,5),(0,2,7),(0,2,8),  
\right. \\ [1 ex]
&
\left.
(0,3,0),(0,3,4),(0,3,7),(0,4,0),(0,4,2),(0,4,3),
\right. \\ [1 ex]
&
\left.
(0,4,4),(0,4,5),(0,4,7),(1,2,1),(1,2,5),(1,2,8), 
\right. \\ [1 ex]
&
\left.
(1,3,0),(1,3,1),(1,3,6),(1,3,8),(1,4,1),(2,3,0),
\right. \\ [1 ex]
&
\left.
(2,3,2),(2,3,4),(2,3,6),(2,4,2),(3,4,1),(3,4,2),
\right. \\ [1 ex]   
&
\left.
(3,4,3),(3,4,5),(3,4,7) 
\right\rbrace.
\end{array}
\right.
$$
\item  If $ 0 \leq a < b \leq 5$, then the difference $M(a,11,11n+d)-M(b,11,11n+d)$ is 
$$
\left\{ 
\begin{array}{ll}
<0&\text{if }  (a,b,d) \in \left\{
(0,1,1),
(0,1,7),
(0,1,8),
(0,1,9), 
(0,2,1),
(0,2,2), \right.\\[1 ex]
&
\left.
(0,2,9),
(0,3,1),
(0,3,8), 
(0,3,9), 
(0,4,1),
(0,4,7),
 \right.
\\ [1 ex]
&
\left.
(0,4,8),
(0,5,1), 
(0,5,9),
(1,2,2), 
(1,2,4), 
(1,3,3), \right.\\ [1 ex]
&
\left.
(1,4,4),
(2,3,3),
(2,3,5), 
(2,3,8),
(2,4,8),
(3,4,4),
\right.\\ [1 ex]
&
\left.
(3,4,7),
(3,4,10),
(3,5,10),
(4,5,5),
(4,5,9)
\right\rbrace, \\ [1 Ex]
 >0&\text{if }  (a,b,d) \in \left\{
(0,b,0),(0,1,3),
(0,1,4),
(0,2,3),(0,2,5),(0,3,4),
\right.\\[1ex]
&
\left.
(0,3,10),
(0,4,3),
(0,4,5),(0,5,3),
(0,5,4),(1,2,1),
\right.\\ [1 ex]
&
\left.
(1,2,5),
(1,2,7),(1,2,8),
(1,3,1),
(1,3,7),
(1,3,10),
\right.\\ [1 ex]
&
\left.
(1,4,1),
(1,4,5),(1,4,9),
(1,5,1),
(1,5,7),
(1,5,8),
\right.\\ [1 ex]
&
\left.
(2,3,2),
(2,3,4),(2,3,10),(2,4,2),
(2,4,9),(2,5,2),(2,5,4),
\right.\\ [1 ex]
&
\left.
(3,4,3),
(3,4,5),
(3,4,9),
(3,5,3),
(3,5,8),(4,5,4) ,
(4,5,7),
\right.\\ [1 ex]
&
\left.
(4,5,8)
\right\rbrace. \\ [1 ex]
\end{array}
\right.
$$
\end{enumerate} 
\end{corollary}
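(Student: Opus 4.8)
The plan is to isolate the dominant term of $M(a,c;n)-M(b,c;n)$ for each $c\in\{5,7,9,11\}$ directly from Theorem~\ref{main1}, and then to turn ``$n$ sufficiently large'' into an explicit constant by reusing the error bookkeeping of the proof of Theorem~\ref{blabla}. Starting from \eqref{rootsum} and inserting Theorem~\ref{main1} one obtains again \eqref{crankdifference}, a decomposition into the sums $S_j$ and $T_j^i$ of \eqref{Sj} and \eqref{Tj}. Since $\sinh$ is increasing, the growth is governed by the largest occurring argument: inside $S_j$ the constraint $c\mid k$ forces this to occur at $k=c$, with rate $\exp\!\big(\tfrac{\pi}{6c}\sqrt{24n-1}\big)$, while inside $T_j^i$ the largest rate is $\exp\!\big(\tfrac{\pi}{\sqrt3}\sqrt{2\delta_0(24n-1)}\big)$, attained at $k=1$, $r=0$, $j=1$, every other $T$-term being strictly smaller because $\delta_{j,c,1,0}^{\pm}<\delta_0$ for $j\ge2$ and $\delta_{1,c,1,0}^{-}<\delta_{1,c,1,0}^{+}=\delta_0$. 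By the elementary comparison already made in the proof of Theorem~\ref{blabla}, $\tfrac{1}{6c}>\sqrt{2\delta_0/3}$ exactly for $c>11$; hence for $c\in\{5,7,9\}$ the $k=c$ contributions to the $S_j$ strictly dominate everything else, while for $c=11$ they tie with the single $T_1^{+}$-term from $k=1,r=0,j=1$.

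Consequently, for $c\in\{5,7,9\}$ and large $n$,
\be
\sgn\big(M(a,c;n)-M(b,c;n)\big)=\sgn\!\Big(\,i\sum_{j=1}^{(c-1)/2}\rho_j(a,b,c)\,\widetilde{B}_{j,c,c}(-n,0)\,\Big)
\ee
whenever this combination is nonzero; if it vanishes, which for fixed $c$ happens only for finitely many residue classes of $n$ and finitely many $(a,b)$, one passes to the next rate $\exp\!\big(\tfrac{\pi}{12c}\sqrt{24n-1}\big)$ coming from $k=2c$ and iterates. For $c=11$ the leading coefficient is the same sum with the extra summand $\rho_1(a,b,11)\,\tfrac{16\sqrt3\sin(\pi/11)}{11}\,D_{1,11,1}\big(-n,m_{1,11,1,0}^{+}\big)$ kept. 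Thus the corollary reduces to: (i)~evaluating the finite Kloosterman-type sums $\widetilde{B}_{j,c,c}(-n,0)$ (and the one sum $D_{1,11,1}$) explicitly, writing out the Dedekind-sum phases $\omega_{h,c}$ and carrying out the ensuing Gauss-sum cancellations so that each $\widetilde{B}_{j,c,c}(-n,0)$ becomes an explicit function of $n\bmod c$; and (ii)~for every admissible triple $(a,b,d)$, reading off the sign of the resulting linear combination of the $\rho_j(a,b,c)$, descending to $k=2c,3c,\dots$ in the finitely many degenerate cases. Here one uses that $\rho_1(a,b,c)=\cos\!\big(\tfrac{2\pi a}{c}\big)-\cos\!\big(\tfrac{2\pi b}{c}\big)>0$ for $0\le a<b\le\tfrac{c-1}{2}$, whereas the higher $\rho_j$ change sign; this is exactly the mechanism producing both inequalities in the tables. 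The composite modulus $c=9$ lies outside the standing primality hypothesis and is handled by appealing to the general odd-$c$ version of the transformation formula and bounds in \cite{Za}.

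To make the implied constant $\widetilde{N}_{a,b,c}$ explicit one repeats the estimates of the proof of Theorem~\ref{blabla}: bound $|\widetilde{B}_{j,c,k}|\le k$ and $|D_{j,c,k}|\le k$ trivially, bound each $\sinh$ by the matching exponential, use the summation and sine estimates behind \eqref{sin}, and use the smaller-arc bound \eqref{errint} together with the symmetrization estimates; collecting all subleading pieces gives a function of $c$ alone times a power of $n$ strictly smaller than $n^{-1/2}e^{(\pi/6c)\sqrt{24n-1}}$ (resp.\ the tied dominant rate for $c=11$), and $\widetilde{N}_{a,b,c}$ is the least $n$ at which the dominant term beats the sum of these bounds.

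The hard part will be steps (i)--(ii): the explicit diagonalization of $\widetilde{B}_{j,c,c}(-n,0)$ as a function of $n\bmod c$, including the computation of the $\omega_{h,c}$ and the finite-field cancellations, together with the large but finite case analysis over all triples $(a,b,d)$ and over the successive moduli $k=c,2c,3c,\dots$ needed whenever a leading coefficient degenerates. The borderline modulus $c=11$, where $S_1$ and $T_1^{+}$ contribute at the same rate and both Kloosterman sums must be retained, and the composite modulus $c=9$, where primality is lost, are where the argument requires the most care.
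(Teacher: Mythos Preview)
Your approach is essentially the same as the paper's: identify that for $c\le 11$ the $S_j$ with $k=c$ carry the dominant $\sinh$ (tied with $T_1^{+}$ at $c=11$), reduce the sign question to that of $\sum_j\rho_j(a,b,c)\,\widetilde B_{j,c,c}(-n,0)$ (plus the single $T_1^{+}$ summand when $c=11$), and recycle the explicit error bounds from Theorem~\ref{blabla} to produce $\widetilde N_{a,b,c}$. Two minor points: your displayed inequality ``$\tfrac{1}{6c}>\sqrt{2\delta_0/3}$ exactly for $c>11$'' has the direction reversed (it holds for $c<11$, with equality at $c=11$), though your subsequent conclusions are stated correctly; and the paper does the sign tabulation in (i)--(ii) by computer rather than by hand, simply noting that because only $k=c$ is needed the primality issue for $c=9$ does not arise in the actual computation of $\widetilde B_{j,9,9}(-n,0)$.
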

\begin{proof}
The proof uses computer techniques. As $c$ is odd and less than $13$, the inequalities are easily checked by hand using MAPLE. That is done by assuming $c<11$ and $k=c$, because this yields the largest argument in the hyperbolic sine in $S_j$ and from that we only have to compute which sign $\sum_{j} \rho_{j}(a,b,c)\widetilde{B}_{j,c,c}(-n,0)$ has to see which inequality the crank differences obey. For $c=11$ the arguments of the hyperbolic sines could match and cancellation between $S_j$ and $T_j$ can occur. So we have to add to $\sum_{j} \rho_{j}(a,b,c)\widetilde{B}_{j,c,c}(-n,0)$ also $\rho_{1}(a,b,11)\sin\left(\frac{\pi}{11}\right)$ corresponding to the maximal argument in the hyperbolic sine coming from the combination $k=1$, $j=1$, $r=0$ to see which inequality the crank differences obey. Computing all the signs gives the complete list. Two things should be mentioned. The largest argument occurs if $c = k $  and that avoids problems in the computation of the $\widetilde{B}_{j,c,c}(-n,0)$ because $c=9$ is not prime. The other important fact is that we have to modify the constant. As the $S_j$ are no error terms for $c< 11$ the $T_1^{+}(a,b,c,n)$ can be bounded by the error term $\Sigma_{err}^{new}(a,b,c,n):=\frac{2}{\rho(a,b,c)}T_1^{+}(a,b,c,n)$. So we obtain
\be 
\tilde{N}_{a,b,c} = \min \left\lbrace n \in \N \left| \sum_j S_j(a,b,c,n) - \sum_{j}\Sigma_{err,j} - \Sigma_{err}^{new} >0 \right. \right\rbrace. 
\ee 
where the $\Sigma_{err,j}$ are all the error terms of Theorem \ref{blabla} except for the $S_j$-terms.
\end{proof}

\end{document}